\documentclass[11pt,amsfonts]{article}
\usepackage{graphicx}
\usepackage{latexsym}
\usepackage{amssymb}
\usepackage{amsmath}
\usepackage{enumerate}

\usepackage{amsmath}
\usepackage{stmaryrd}
\usepackage{hyperref}
\usepackage{amssymb}
\usepackage{CJK}
\usepackage{color}

\usepackage{amsmath,latexsym,amssymb,amsthm,array,amsfonts}

\usepackage{layout}

\usepackage{tikz}

\newtheorem{prop}{Proposition}
\newtheorem{lemma}{Lemma}

\newtheorem{theorem}{Theorem}

\newtheorem{remark}{Remark}

\newtheorem{condition}{Condition}

\def\real{{\mathord{{\rm I\kern-2.8pt R}}}}        
\def\inte{{\mathord{{\rm I\kern-2.8pt N}}}}

\def\sZZ{{\rm Z\kern-2.8ptem{}Z}}

\def\z{{\mathchoice
		{\sZZ}
		{\sZZ}
		{\rm Z\kern-0.30em{}Z}
		{\rm Z\kern-0.25em{}Z} }}
\def\sQQ{{\kern 0.27em \vrule height1.45ex width0.03em depth0em
		\kern-0.30em \rm Q}}
\def\qu{{\mathchoice
		{\sQQ}
		{\sQQ}
		{\kern 0.225em \vrule height1.05ex width0.025em depth0em \kern-0.25em \rm Q}
		{\kern 0.180em \vrule height0.78ex width0.020em depth0em \kern-0.20em \rm Q}
}}
\def\sCC{{\kern 0.27em \vrule height1.45ex width0.03em depth0em
		\kern-0.30em \rm C}}
\def\complex{{\mathchoice
		{\sCC}
		{\sCC}
		{\kern 0.225em \vrule height1.05ex width0.025em depth0em \kern-0.25em \rm C}
		{\kern 0.180em \vrule height0.78ex width0.020em depth0em \kern-0.20em \rm C}
}}

\def\qed{ \hfill \vrule width.25cm height.25cm depth0cm\smallskip}

\newcommand{\ignore}[1]{}
\newcommand{\CQFD}{\hfill $\Box$}

\usepackage{geometry}
\begin{document}
	
	\renewcommand{\thefootnote}{\fnsymbol{footnote}}
	
	\renewcommand{\thefootnote}{\fnsymbol{footnote}}

	\title{Multidimensional Stein's method for asymptotic independence with invariant measures of diffusion} 
	
	\author{Ciprian A. Tudor (Université de Lille) \\  J\'er\'emy Zurcher (Georgia Institute of Technology)\vspace*{0.2in}
	\vspace*{0.1in} }
	
	\maketitle

    \begin{abstract}
        We derive a multidimensional Stein's method for asymptotic independence in the case of a general target $\mu$ with a density, being invariant measure of a diffusion process. It allows us to give a general bound in Wasserstein distance between the law of a couple $(X, Y)$, where $X$ is a random variable, and $Y$ a random vector and $\mu \otimes \mathrm{Law}(Y)$. We focus in particular in the case where $X$ and $Y$ are differentiable in the Malliavin sense, by being function of a finite number of stochastic Wiener integrals.
    \end{abstract}
	
\section{Introduction}

Stein's method has become a central tool in modern probability theory for obtaining
quantitative bounds in distributional approximations. By characterizing a target law
through an associated differential operator and Stein equation, it allows one to control
distances between probability measures in various metrics such as the Wasserstein,
Kolmogorov, or total variation distances. Originally developed for Gaussian
approximation, the method has since been extended to a wide class of distributions and
multidimensional settings.

A major breakthrough in this direction is the combination of Stein's method with
Malliavin calculus, initiated in the Gaussian framework (see \cite{NP-book} for a
complete exposition) and later generalized to other target distributions. In particular,
a large class of probability distributions to which the Stein--Malliavin calculus can be
applied is the class of the so-called invariant measures of diffusions, which includes
the best known probability laws with density (among others, Gaussian, Gamma, uniform, or
Beta distributions). We refer to \cite{EDVI}, \cite{TuKus}, or \cite{TuKus2} for the
development of the Stein--Malliavin calculus for such target distributions.

More recently, attention has shifted toward multidimensional extensions and asymptotic
independence problems. In this context, the multidimensional Stein--Malliavin framework
developed in \cite{T2, TuZu2} provides quantitative bounds for the distance between the
joint law of a pair $(X, Y)$ and a product measure of the form
$\mathcal{N}(0,1) \otimes \mathbf{P}_Y$, where the first marginal is Gaussian and
independent of the second. We refer to \cite{Pi}, \cite{T2}, \cite{TuZu1},
\cite{TuZu2}, \cite{TuZu3}, \cite{BaZh}, or \cite{SaMa} for some recent results in
this direction. This line of research is closely related to earlier works on asymptotic
independence on Wiener chaos (see \cite{NR}, \cite{NNP}) and highlights the deep
interplay between Malliavin calculus, contraction operators, and independence phenomena.

The aim of the present paper is to contribute to this line of research by developing a
multidimensional Stein's method for asymptotic independence in the case of a general
target distribution with density, going substantially beyond the Gaussian framework of
\cite{T2, TuZu2} and the Gamma framework of \cite{TuZu1}. More precisely, given a
probability measure $\mu$ on $\mathbb{R}$ which is the invariant measure of a diffusion
process and which we assume admits a density, we study the distance between the joint law
$\mathbf{P}_{(X, Y)}$ and the product measure $\mu \otimes \mathbf{P}_{Y}$. Our approach
relies on the construction and analysis of a Stein equation associated with $\mu$, whose
structure is naturally linked to the generator of a diffusion having $\mu$ as invariant
measure, in the spirit of \cite{TuKus} or \cite{EDVI}.

What makes the present contribution new and different from the existing literature is
threefold. First, while \cite{T2} and \cite{TuZu2} treat the case of a Gaussian target
and \cite{TuZu1} addresses the Gamma distribution, we work here with an arbitrary
probability measure $\mu$ that is the invariant measure of a diffusion --- a class that
encompasses all these cases and many more. Second, unlike the Gaussian setting where the
diffusion coefficient is constant and explicit formulas for the Stein solution are
available, our general setting requires a careful analysis of the diffusion coefficient
$a(x)$ near the boundary of the support of $\mu$, leading to new structural conditions
and a new technical framework. Third, the asymptotic independence criterion we derive
(Theorem~\ref{tt1}) is expressed entirely in terms of Malliavin operators applied to $X$
and $Y$, providing a unified and computable criterion that recovers the known results of
\cite{T2, TuZu1, TuZu2} as special cases.

A key difficulty in this setting lies in obtaining sharp bounds on the derivatives of the
solution to the Stein equation. By introducing suitable structural conditions and smooth
approximations of $a$ near the edges of the support, we derive bounds that depend only
on the Lipschitz norm of the test function --- a feature that is essential for the
Stein--Malliavin approach and that was not available for general invariant measures before
the present work. Our main results thus extend the sharp estimates of \cite{TuKus, EDVI}
to the multidimensional and asymptotic independence setting, and the asymptotic
independence bounds of \cite{T2, TuZu1, TuZu2} to a broad class of non-Gaussian target
distributions.

Finally, we illustrate the applicability of our results through three examples involving
functionals of isonormal Gaussian processes: convergence of second-chaos sequences to a
Gamma distribution, asymptotic independence for functionals related to the uniform
distribution on $[0,1]$, and an exponential functional converging to the lognormal
distribution. These examples show how asymptotic independence emerges in genuinely
non-Gaussian settings, complementing the existing results on Wiener chaos and
multidimensional convergence of \cite{NR, NNP, T2, TuZu3}.

The paper is organized as follows. Section~2 introduces the framework and the Stein
operator associated with $\mu$. Section~3 is devoted to solving the Stein equation and
deriving sharp bounds on its solution and derivatives. Section~4 contains the main
theorem on Wasserstein distance and asymptotic independence via Malliavin calculus.
Section~5 presents the applications. Section~6 collects the necessary tools from
Malliavin calculus.

\section{Framework}

Let $(\Omega, \mathcal{F}, \mathbf{P})$ be a probability space. Let $\mu$ be a
probability measure on $\mathbb{R}$ admitting a density $p : (l,u) \to \mathbb{R}_+$,
where $-\infty \leq l < u \leq +\infty$. We assume that $p$ is strictly positive on
$(l,u)$ and admits a finite first moment. We denote by
\[
m := \int_l^u t\,p(t)\,\mathrm{d}t
\]
the mean of $\mu$, and by $F$ its cumulative distribution function.

Following the general philosophy of Stein's method for invariant measures of diffusions
(see, e.g.\ \cite{TuKus} and \cite{EDVI}), we associate to $\mu$ a first-order
differential operator of the form
\[
\mathcal{A}f(x) = \frac{1}{2}a(x)f'(x) - (x-m)f(x),
\]
where the diffusion coefficient $a : (l,u) \to \mathbb{R}_+$ is defined by
\begin{equation}\label{defa}
	a(x) = \frac{2}{p(x)} \int_l^x (m - t)p(t)\,\mathrm{d}t
	= \frac{2}{p(x)} \int_x^u (t - m)p(t)\,\mathrm{d}t.
\end{equation}
By construction, $a(x) > 0$ for all $x \in (l,u)$. The measure $\mu$ is the invariant
measure of the diffusion process $(X_{t}, t\geq 0)$ given by
$\mathrm{d}X_{t} = -(X_{t}-m)\,\mathrm{d}t + \sqrt{a(X_{t})}\,\mathrm{d}W_{t}$,
where $W$ is a Wiener process; see \cite{BSS}.

The operator $\mathcal{A}$ characterizes the measure $\mu$ in the sense that, for every
function $f \in C^{1}(l,u)$,
\begin{equation}\label{22a-1}
	\mathbf{E}[\mathcal{A}f(Z)] = 0 \quad \text{if and only if } Z \sim \mu,
\end{equation}
see Theorem~3.2 in \cite{TuKus2}. This property can be easily extended to the
multidimensional context as follows.

\begin{lemma}
	Let $X \sim \mu$ be a $(l,u)$-valued random variable and let $Y$ be another random
	vector independent of $X$, of dimension $d$. Then for every
	$h \in C^{1}\!\left((l,u) \times \mathbb{R}^d\right)$,
	\[
	\mathbf{E} \left[ \frac{1}{2}a(X)\frac{\partial h}{\partial x}(X,Y)
	-(X-m)\,h(X,Y)\right] = 0.
	\]
\end{lemma}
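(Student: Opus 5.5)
We reduce to the one-dimensional characterization (\ref{22a-1}) by conditioning on $Y$. Since $X$ and $Y$ are independent, the law of $(X,Y)$ is $\mu\otimes \mathbf{P}_Y$. Fubini's theorem then gives
\[
\mathbf{E}\left[\tfrac12 a(X)\partial_x h(X,Y)-(X-m)h(X,Y)\right]
=\int_{\mathbb{R}^d}\mathbf{E}\left[\tfrac12 a(X)\partial_x h(X,y)-(X-m)h(X,y)\right]\mathbf{P}_Y(\mathrm{d}y).
\]
For each fixed $y$, the function $f_y:=h(\cdot,y)$ belongs to $C^1(l,u)$ with $f_y'=\partial_x h(\cdot,y)$. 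The inner expectation is therefore $\mathbf{E}[\mathcal{A}f_y(X)]$, and it vanishes by the ``only if'' direction of (\ref{22a-1}) because $X\sim\mu$. Integrating against $\mathbf{P}_Y$ gives zero.

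The main obstacle is integrability. Fubini requires the integrand to lie in $L^1(\mu\otimes\mathbf{P}_Y)$, and (\ref{22a-1}) is implicitly stated for $f$ such that the expectations make sense, for instance $\mathbf{E}|a(X)f'(X)|+\mathbf{E}|(X-m)f(X)|<\infty$. I would therefore read the lemma under the standing assumption that $\mathbf{E}|a(X)\partial_xh(X,Y)|+\mathbf{E}|(X-m)h(X,Y)|<\infty$. This covers, e.g., bounded $h$ with bounded $\partial_x h$, since $\mathbf{E}[a(X)]=2\,\mathrm{Var}(X)$ when the second moment is finite. Under this assumption Fubini--Tonelli gives, for $\mathbf{P}_Y$-a.e. $y$, the integrability of the inner terms, so (\ref{22a-1}) applies for almost every $y$. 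That is enough.

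To make the proof self-contained rather than relying on the precise hypotheses of Theorem~3.2 in \cite{TuKus2}, I would verify the one-dimensional identity directly. By (\ref{defa}), $\tfrac12 a(x)p(x)=\int_l^x(m-t)p(t)\,\mathrm{d}t=:g(x)$, so $g'(x)=(m-x)p(x)$. Then
\[
\int_l^u\left[g(x)f'(x)+g'(x)f(x)\right]\mathrm{d}x=\lim_{b\uparrow u,\ c\downarrow l}\big[g f\big]_c^b .
\]
The boundary terms vanish: $g(l^+)=g(u^-)=0$ because $m$ is the mean of $\mu$. Along suitable sequences $c_n\downarrow l$ and $b_n\uparrow u$ one also has $g f\to0$, since $(gf)'\in L^1$ forces $gf$ to have limits at both ends, and those limits must be zero if $gf$ is integrable near the ends or if $f$ is bounded. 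Applying this to $f_y$ pointwise in $y$ and integrating completes the argument.
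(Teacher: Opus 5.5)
Your main argument is the paper's proof: apply \eqref{22a-1} to $h(\cdot,y)$ for each fixed $y$, then integrate against $\mathbf{P}_Y$, using independence to identify $\mathbf{P}_X\otimes\mathbf{P}_Y$ with $\mathbf{P}_{(X,Y)}$. Your integrability hypothesis is a genuine sharpening that the paper skips. Take $\mu=\mathrm{Unif}(0,1)$, so $a(x)=x(1-x)$ and $m=\tfrac12$, and $h(x,y)=1/x$: then $\tfrac12 a(x)\partial_x h(x,y)-(x-m)h(x,y)\equiv-\tfrac12$. So the lemma fails as literally stated for all $h\in C^1$. Your condition excludes this $h$, because neither term is integrable near $0$.

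The one incorrect step is in your optional direct verification of \eqref{22a-1}. You say the limits of $gf$ at the ends vanish ``if $gf$ is integrable near the ends''. This is false at a finite endpoint: a finite limit already makes $gf$ bounded, hence integrable, there. In the example above, $gf=(1-x)/2\to\tfrac12$ at $0$. What forces the limit to vanish is your own standing hypothesis $g'f\in L^1$, i.e.\ $\mathbf{E}|(X-m)f(X)|<\infty$. Suppose $gf\to L\neq0$ at $l$. Then near $l$ one has $|g'f|\geqslant\frac{|L|}{2}\,\frac{|g'|}{g}$. For $x_0\in(l,m)$, since $g'=(m-x)p>0$ on $(l,m)$ and $g(l^+)=0$, one has $\int_l^{x_0}\frac{g'}{g}\,\mathrm{d}x=\log g(x_0)-\lim_{x\downarrow l}\log g(x)=+\infty$. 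This contradicts $g'f\in L^1$, and the endpoint $u$ is handled symmetrically. With this replacement the self-contained argument is complete; alternatively, restrict to bounded $f$, as you also note.
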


\noindent{\bf Proof:} By \eqref{22a-1}, for every $y \in \mathbb{R}^d$,
\[
\mathbf{E}\!\left[ \frac{1}{2}a(X)\frac{\partial h}{\partial x}(X,y)
-(X-m)\,h(X,y)\right] = 0,
\]
or equivalently
\[
\int_{l}^{u} \left[ \frac{1}{2}a(x)\frac{\partial h}{\partial x}(x,y)
-(x-m)\,h(x,y)\right] \mathbf{P}_{X}(\mathrm{d}x) = 0,
\]
where $\mathbf{P}_{X}$ is the probability distribution of $X$. By integrating with
respect to $\mathbf{P}_{Y}$ over $\mathbb{R}^d$ and using the independence of $X$
and $Y$,
\begin{eqnarray*}
	&&\int_{\mathbb{R}^d}\int_{l}^{u}
	\left[\frac{1}{2}a(x)\frac{\partial h}{\partial x}(x,y)
	-(x-m)\,h(x,y)\right]\mathbf{P}_{X}(\mathrm{d}x)\,\mathbf{P}_{Y}(\mathrm{d}y)\\
	&=&\int_{(l,u)\times\mathbb{R}^d}
	\left[\frac{1}{2}a(x)\frac{\partial h}{\partial x}(x,y)
	-(x-m)\,h(x,y)\right]\mathbf{P}_{X}\otimes\mathbf{P}_{Y}(\mathrm{d}x,\mathrm{d}y)\\
	&=&\int_{(l,u)\times\mathbb{R}^d}
	\left[\frac{1}{2}a(x)\frac{\partial h}{\partial x}(x,y)
	-(x-m)\,h(x,y)\right]\mathbf{P}_{(X,Y)}(\mathrm{d}x,\mathrm{d}y) = 0.
\end{eqnarray*}
The last equality is equivalent to the conclusion of the lemma. \qed

Let $X$ be a real-valued random variable with values in $(l,u)$, and let $Y$ be a
random vector in $\mathbb{R}^d$, both defined on the probability space
$(\Omega,\mathcal{F},\mathbf{P})$. The main objective of this paper is to quantify the
distance between the joint distribution $\mathbf{P}_{(X,Y)}$ and the product measure
$\mu \otimes \mathbf{P}_Y$, that is, to measure how close $X$ is to $\mu$ and how close
$(X,Y)$ is to being independent.

To this end, we adopt a multidimensional Stein approach and consider test functions
$h : (l,u) \times \mathbb{R}^d \to \mathbb{R}$ belonging to suitable smoothness
classes. The associated Stein equation takes the form
\[
\frac{1}{2}a(x)\,\partial_x f(x,y) - (x-m)f(x,y)
= h(x,y) - \mathbf{E}[h(Z,y)],
\]
where $Z \sim \mu$ is independent of $Y$. The analysis of this equation, and in
particular the derivation of uniform bounds on the solution $f$ and its derivatives,
plays a central role in obtaining quantitative estimates.

In order to derive such bounds, special attention must be paid to the behavior of the
coefficient $a(x)$ near the boundaries $l$ and $u$, especially when these are infinite.
As in \cite{EDVI}, we will impose structural conditions ensuring that $a$ does not
vanish and can be approximated by a smoother function near the edges of the support.
These assumptions are crucial to control the so-called Stein factors and to obtain
bounds depending only on the derivatives of the test function.

Finally, in order to connect this framework with Malliavin calculus, we will assume that
the random variables $X$ and $Y$ belong to suitable Sobolev spaces on the Wiener space
(typically $\mathbb{D}^{1,2}$ or $\mathbb{D}^{1,4}$). This allows us to express the
error terms in terms of Malliavin operators such as the derivative $D$ and the
pseudo-inverse of the Ornstein--Uhlenbeck generator $L^{-1}$, in the spirit of
\cite{TuKus}.

This framework sets the stage for the analysis of the Stein equation in the next section
and for the derivation of quantitative bounds for asymptotic independence.

\section{Stein's equation}

Let us consider Stein's equation:

\begin{equation}\label{st1}
	\frac{1}{2} a(x) \frac{\partial f_h}{\partial x} (x, y) - (x-m) f_h (x, y)
	= h(x, y) - \mathbf{E} [h(Z, y)],
\end{equation}

\noindent where $h : (l, u) \times \mathbb{R}^d \longrightarrow \mathbb{R}$ is a test
function whose regularity is specified in Proposition~\ref{propINGf}, and $Z$ follows
$\mu$. This section is devoted to solving \eqref{st1} and to analyzing the properties
of its solution. To do this, we will assume some conditions, which we write in two
parts for clarity. The first concerns the behavior of $a$ at the edges of $(l, u)$
when they are infinite.

\begin{condition}\label{cdn1}
	We assume that $a$ satisfies the following lower bounds, when $l = -\infty$ or
	$u = +\infty$:
	\begin{enumerate}
		\item If $l = -\infty$, $\liminf_{x \to -\infty} a(x) > 0$;
		\item If $u = +\infty$, $\liminf_{x \to +\infty} a(x) > 0$.
	\end{enumerate}
\end{condition}

In other words, when the interval contains an infinite bound, we impose that $a(x)$
does not vanish as $x$ goes to that bound. The second condition concerns approximating
$a$, which may have limited regularity, by a smoother function near the edges. These
conditions were inspired by \cite{EDVI}.

\begin{condition}\label{cdn2}
	We assume that there exists a differentiable function
	$\tilde{a} : (l, u) \longrightarrow \mathbb{R}$ such that:
	\begin{enumerate}
		\item The limits of $\tilde{a}$ at $l$ and $u$ both exist in $\bar{\mathbb{R}}$.
		\item The limits of $\tilde{a}'$ at $l$ and $u$ both exist in $\bar{\mathbb{R}}$
		when $l$ or $u$ is infinite. If $l > -\infty$, we assume
		$\liminf_{x \to l} \tilde{a}'(x) \geqslant 0$, and if $u < \infty$,
		$\liminf_{x \to u} \tilde{a}'(x) \leqslant 0$.
		\item $0 < \liminf_{x \to l} \dfrac{a(x)}{\tilde{a}(x)}
		\leqslant \limsup_{x \to l} \dfrac{a(x)}{\tilde{a}(x)} < \infty$;
		\item $0 < \liminf_{x \to u} \dfrac{a(x)}{\tilde{a}(x)}
		\leqslant \limsup_{x \to u} \dfrac{a(x)}{\tilde{a}(x)} < \infty$.
	\end{enumerate}
\end{condition}

Having made these remarks, let us comment on the above assumptions.

\begin{remark}\label{rqcst}
	\begin{enumerate}
		\item Point~3 can be rephrased as follows: there exists a neighborhood of $l$ and
		constants $c_1, C_1 > 0$ such that $c_1 \tilde{a} \leqslant a \leqslant C_1 \tilde{a}$
		in that neighborhood. The same remark applies in the neighborhood of $u$.
		
		\item At first sight, one could have used the same conditions as in \cite{TuKus},
		Proposition~3, which are simpler. More precisely, it would suffice to suppose that
		$\tilde{a}(x) = x - l$ near $l$ and $\tilde{a}(x) = u - x$ near $u$, when the edges
		are finite. However, as shown in the next proposition, we seek an estimate of the
		solution of Stein's equation \eqref{st1} of the form
		\[ \left\| \frac{\partial f_h}{\partial x} \right\|_{\infty}
		\leqslant C \left\| \frac{\partial h}{\partial x} \right\|_{\infty}. \]
		The conditions of \cite{TuKus} only yield an estimate (see Proposition~3 there) of
		the form
		\[ \left\| \frac{\partial f_h}{\partial x} \right\|_{\infty}
		\leqslant C \left( \| h \|_{\infty} + \left\| \frac{\partial h}{\partial x}
		\right\|_{\infty} \right). \]
		Condition~\ref{cdn2} comes from \cite{EDVI} (Condition~B). The idea there is to
		introduce a smooth approximation of $a$ near the edges, which allows one to bound the
		function $S$ defined in \eqref{S(x)1}. In practice, as the authors note before
		Assumption~B following the proof of Theorem~9, $\tilde{a}$ coincides with $a$ itself.
		
		\item When $a$ belongs to $C^{1}(l,u)$, the function $\tilde{a}$ in Condition~\ref{cdn2}
		coincides with $a$ (see \cite{EDVI}, page~199).
	\end{enumerate}
\end{remark}

With these remarks in hand, we can state the main result of this section. For
$f : (l, u) \times \mathbb{R}^d \longrightarrow \mathbb{R}$ a bounded function, we
denote
\[ \| f \|_{\infty} := \sup_{(x, y) \in (l, u) \times \mathbb{R}^d}
\big| f(x, y) \big|. \]

\begin{prop}\label{propINGf}
	Assume that both Conditions~\ref{cdn1} and \ref{cdn2} are satisfied. Let
	$h : (l, u) \times \mathbb{R}^d \longrightarrow \mathbb{R}$ be a bounded $C^1$
	function with bounded partial derivatives. There exists a solution to \eqref{st1}
	which is bounded on $(l, u) \times \mathbb{R}^d$. It is given for every
	$y \in \mathbb{R}^d$ and $x \in (l, u)$ by:
	\begin{eqnarray}\label{expsol}
		f_h (x, y) &=& \frac{2}{a(x) p(x)} \int_{l}^{x}
		\bigl(h(t, y) - \mathbf{E}[h(Z, y)]\bigr)\, p(t)\,\mathrm{d}t \\
		&=& \frac{2(1 - F(x))}{a(x) p(x)} \int_{l}^x \frac{\partial h}{\partial x}(w, y)
		F(w)\,\mathrm{d}w + \frac{2 F(x)}{a(x) p(x)} \int_x^{u}
		\frac{\partial h}{\partial x}(w, y)(1 - F(w))\,\mathrm{d}w,
		\label{exp2sol}
	\end{eqnarray}
	where $F$ is the cumulative distribution function of $\mu$ and $Z \sim \mu$.
	Moreover, the following inequalities hold:
	\begin{equation}\label{INGf}
		\sup_{(x, y) \in (l, u) \times \mathbb{R}^d} \big| f_h(x, y) \big|
		\leqslant \left\| \frac{\partial h}{\partial x} \right\|_{\infty},
		\qquad
		\max_{1 \leqslant j \leqslant d}\,\sup_{(x, y) \in (l, u) \times \mathbb{R}^d}
		\left| \frac{\partial f_h}{\partial y_j}(x, y) \right|
		\leqslant \frac{2}{a(\mu_m)\,p(\mu_m)}
		\left\| \frac{\partial h}{\partial y_j} \right\|_{\infty},
	\end{equation}
	where $\mu_m$ is the median of $\mu$, and there exists $C > 0$ (equal to
	$\| S \|_{\infty}$ with $S$ defined in Lemma~\ref{Lem:IngS}) such that
	\begin{equation}\label{INGdf}
		\sup_{(x, y) \in (l, u) \times \mathbb{R}^d}
		\left| \frac{\partial f_h}{\partial x}(x, y) \right|
		\leqslant C \left\| \frac{\partial h}{\partial x} \right\|_{\infty}.
	\end{equation}
\end{prop}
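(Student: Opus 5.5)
Since $y$ enters \eqref{st1} only as a parameter, the plan is to solve a one-dimensional Stein equation for each fixed $y\in\mathbb{R}^d$. The bounds for the one-dimensional solution are then uniform in $y$, and the $y$-derivatives are handled by differentiating under the integral sign.

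\textbf{Existence and the two representations.} From \eqref{defa}, $\frac{1}{2}a p$ has derivative $(m-x)p(x)$. Hence $(\frac{1}{2}a p f)' = p\,(\frac{1}{2}a f' - (x-m)f)$, so \eqref{st1} is equivalent to $\bigl(\frac{1}{2}a(x)p(x) f_h(x,y)\bigr)' = p(x)\,\bigl(h(x,y)-\mathbf{E}[h(Z,y)]\bigr)$. Integrating from $l$ and choosing the constant zero gives \eqref{expsol}. Since $\int_l^u (h(t,y)-\mathbf{E}h(Z,y))p(t)\,dt=0$, the same function equals $-\frac{2}{ap}\int_x^u(\dots)p$. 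To obtain \eqref{exp2sol}, I write $h(t,y)-\mathbf{E}[h(Z,y)] = \int_l^u (h(t,y)-h(s,y))p(s)\,ds$, express $h(t,y)-h(s,y)$ as an integral of $\partial_x h$, and apply Fubini. This produces the kernel $\partial_x h(w,y)\,[\mathbf 1_{w\le x}F(w)(1-F(x)) + \mathbf 1_{w>x}F(x)(1-F(w))]$. Boundedness of $h$ and finiteness of the first moment justify the exchanges of integrals.

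\textbf{Bounds on $f_h$ and on $\partial_{y_j}f_h$.} From \eqref{exp2sol},
\[
|f_h(x,y)|\le \|\partial_x h\|_\infty\,\frac{2}{a(x)p(x)}\Bigl[(1-F(x))\int_l^x F + F(x)\int_x^u(1-F)\Bigr].
\]
Using $\int_l^x F(w)\,dw = \int_l^x (x-t)p(t)\,dt$ and the analogous identity on $(x,u)$, the bracket equals $(1-F)\mathbf{E}[(x-Z)_+] + F\,\mathbf{E}[(Z-x)_+]$. Since $\mathbf{E}[(x-Z)_+]-\mathbf{E}[(Z-x)_+] = x-m$, this becomes $\mathbf{E}[(Z-x)_+] + (1-F(x))(x-m) = \int_x^u (t-m)p(t)\,dt = \frac{1}{2}a(x)p(x)$. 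The right-hand side therefore collapses to $\|\partial_x h\|_\infty$, which is the first part of \eqref{INGf}.

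For $\partial_{y_j}f_h$, I differentiate \eqref{expsol} under the integral, which is dominated convergence with $\partial_{y_j}h$ bounded. Since $|\partial_{y_j}h(t,y) - \mathbf{E}\,\partial_{y_j}h(Z,y)| \le 2\|\partial_{y_j}h\|_\infty$, this gives the bound $\frac{2\|\partial_{y_j}h\|_\infty}{a p}\min(F,1-F)\cdot$const. I would use the $\int_l^x$ form for $x\le\mu_m$ and the $\int_x^u$ form for $x\ge\mu_m$. The remaining task is to check that $\min(F(x),1-F(x))/(a(x)p(x))$ is maximized at the median. Writing $\frac{1}{2}ap = \int_l^x(m-t)p$, I would compare derivatives of $F$ and $\frac{1}{2}ap$. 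I expect this to require a small monotonicity argument, and it is the least certain part of the routine steps; the target constant $2/(a(\mu_m)p(\mu_m))$ comes out when the ratio is evaluated at $\mu_m$.

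\textbf{Bound on $\partial_x f_h$ (main obstacle).} Using \eqref{st1}, $\partial_x f_h = \frac{2}{a}\bigl(h-\mathbf{E}h + (x-m)f_h\bigr)$. Differentiating \eqref{exp2sol} directly instead gives $\partial_x f_h$ as $\partial_x h$ integrated against a kernel involving $\bigl(\frac{1-F}{ap}\bigr)'$ and $\bigl(\frac{F}{ap}\bigr)'$; the terms where the derivative falls on the integration limits cancel. This yields $|\partial_x f_h|\le S(x)\|\partial_x h\|_\infty$ with an explicit $S$, which I take to be the $S$ of Lemma~\ref{Lem:IngS}, assumed to be bounded. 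If that lemma's boundedness must be proved here, I would follow \cite{EDVI}. On compact subintervals, $S$ is continuous. Near an infinite edge, Condition~\ref{cdn1} keeps $a$ away from zero, and tail estimates of the form $\int_x^u(1-F)\lesssim (1-F(x))\,a(x)/|x-m|$ give boundedness. Near a finite edge, Condition~\ref{cdn2} lets me replace $a$ by $\tilde a$ up to constants. The existence of the limits of $\tilde a$ and $\tilde a'$ then gives L'Hôpital-type control of ratios such as $F(x)(x-l)/(a(x)p(x))$. This edge analysis is the delicate step.
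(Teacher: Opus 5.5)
Your proposal is correct and follows the paper's proof in its overall structure. It uses the integrating factor $\frac12 ap$ for existence and the Fubini identity for \eqref{exp2sol}. It uses the exact collapse $(1-F)\int_l^x F+F\int_x^u(1-F)=\frac12 ap$ for the first bound in \eqref{INGf}, and the comparison of $F/(ap)$ with $(1-F)/(ap)$ at the median for the $y$-derivatives. Like the paper, you take the boundedness of $S$ from Lemma~\ref{Lem:IngS}, which is legitimate.

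The one real difference is the $\partial_x f_h$ estimate. The paper differentiates \eqref{expsol}, uses the Stein equation, and then substitutes \eqref{eq:tilde_h}, \eqref{eq:Exp_Tilde_h} and the two integration-by-parts expressions for $\frac12 ap$. You instead differentiate \eqref{exp2sol} directly, so the boundary terms cancel and only $A:=F/(ap)$ and $B:=(1-F)/(ap)$ get differentiated. Your route is shorter, and it already contains the computation that settles both steps you left unverified. From $(ap)'=2(m-x)p$ and $\frac12 ap=(m-x)F+\int_l^x F=(x-m)(1-F)+\int_x^u(1-F)$ one gets
\[
A'(x)=\frac{2p(x)\int_l^x F(w)\,\mathrm{d}w}{\bigl(a(x)p(x)\bigr)^2}\geqslant 0,
\qquad
B'(x)=-\frac{2p(x)\int_x^u \bigl(1-F(w)\bigr)\,\mathrm{d}w}{\bigl(a(x)p(x)\bigr)^2}\leqslant 0 .
\]
This gives two things:
\begin{enumerate}
\item It is exactly the monotonicity you flagged as uncertain. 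Hence $\min\{A,B\}$ peaks where $F=\frac12$, and the constant $\frac{2}{a(\mu_m)p(\mu_m)}$ follows.
\item Inserting it into $|\partial_x f_h|\leqslant 2\|\partial_x h\|_\infty\bigl(|B'|\int_l^x F+|A'|\int_x^u(1-F)\bigr)$ gives precisely $S(x)\|\partial_x h\|_\infty$. Once the sign correction below is made, you recover the paper's identity $\frac14a^2p\,\partial_x f_h=\bigl(\int_x^u(1-F)\bigr)\int_l^x\partial_x h\,F-\bigl(\int_l^x F\bigr)\int_x^u\partial_x h\,(1-F)$.
\end{enumerate}
The paper's own displayed computation of $A'$ ends with ``$=0$'', which is not right. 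The formula above is the correct value, and only its sign is needed.

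One correction concerns your Fubini step. Carrying it out gives $\int_l^x\bigl(h(t,y)-\mathbf{E}[h(Z,y)]\bigr)p(t)\,\mathrm{d}t=-\bigl[(1-F(x))\int_l^x\partial_x h\,F+F(x)\int_x^u\partial_x h\,(1-F)\bigr]$. So your kernel has the wrong overall sign, as do \eqref{exp2sol} as stated and the paper's \eqref{eq:Exp_Tilde_h}. For example, with $\mu$ uniform on $(0,1)$ and $h(x,y)=x$, \eqref{expsol} gives $f_h\equiv -1$, while the right-hand side of \eqref{exp2sol} gives $+1$. This does not affect any of the three bounds, since only absolute values of the representation enter. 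Still, the representation you claim to obtain should carry a minus sign.
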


The key technical ingredient in the proof is the following lemma.

\begin{lemma}\label{Lem:IngS}
	Define, for all $x \in (l, u)$,
	\begin{equation}\label{S(x)1}
		S(x) := \frac{8 \left( \int_{l}^x F(w)\,\mathrm{d}w \right)
			\left( \int_{x}^{u} (1 - F(w))\,\mathrm{d}w \right)}{a(x)^2\,p(x)}.
	\end{equation}
	Then $S$ is bounded on $(l, u)$.
\end{lemma}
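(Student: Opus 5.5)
Since $S$ is continuous and positive on $(l,u)$ ($a$ and $p$ are continuous and positive there, and the integrals are finite because $\mu$ has a first moment), it is bounded on every compact subinterval $[l',u']\subset(l,u)$. The proof therefore reduces to showing that $\limsup S(x)<\infty$ as $x\to l$ and as $x\to u$. By the symmetry $x\mapsto -x$ (which exchanges $F$ and $1-F$ and preserves the form of $a$), it suffices to treat the edge $l$. A first useful rewriting comes from integrating the definition \eqref{defa} by parts:
\[
\frac{1}{2}a(x)p(x)=\int_l^x (m-t)p(t)\,\mathrm{d}t = \int_l^x \bigl(F(x)-F(t)\bigr)\,\mathrm{d}t\cdot 0 + \int_x^u(1-F(w))\,\mathrm{d}w\cdot 0 + \Bigl(\int_x^u(1-F)\Bigr)F(x)\cdots
\]
More cleanly, $\int_l^x (m-t)p(t)\,dt = (m-x)F(x)+\int_l^x F(w)\,dw$, and since $m-x=\int_x^u(1-F)-\int_l^x F$, this gives the identity
\[
\tfrac12 a(x)p(x) = F(x)\int_x^u (1-F(w))\,\mathrm{d}w+(1-F(x))\int_l^x F(w)\,\mathrm{d}w .
\]
Near $l$ we have $\int_x^u(1-F)\to m-l$ (finite or infinite) and $F\to0$.

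Next I would bound $S$ using this identity:
\[
S(x)\le \frac{8\int_l^x F\cdot\int_x^u(1-F)}{a(x)\cdot 2(1-F(x))\int_l^x F/a(x)\cdots}
\]
Precisely, $a(x)p(x)\ge 2(1-F(x))\int_l^xF$, so
\[
S(x)\le \frac{4\int_x^u(1-F(w))\,\mathrm{d}w}{(1-F(x))\,a(x)} .
\]
If $l=-\infty$, then $\int_x^u(1-F)/(1-F(x))$ behaves like $m-x+O(1)$, which grows, so this crude bound fails. In that case I would instead use the other term: $a(x)p(x)\ge 2F(x)\int_x^u(1-F)$, which gives $S(x)\le 4\int_l^xF/(F(x)a(x))$. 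Condition~\ref{cdn1} makes $a$ bounded below near $-\infty$, so I need $\int_{-\infty}^x F \lesssim F(x)$. This is exactly where $\tilde a$ enters. Writing $G(x)=\int_l^xF$, we have $G'=F$ and $F'=p$. Using $a p\approx 2F\cdot(m-x)$ near $-\infty$, the ratio $G/F$ is controlled via a l'Hôpital argument: $G/F$ has the same limit as $F/p$, and $F/p \approx a/(2(m-x))\cdot(\ldots)$. Comparing $a$ with $\tilde a$ (Condition~\ref{cdn2}, points~1--3 and Remark~\ref{rqcst}) and using the existence of the limits of $\tilde a'$ should make $F/p$ bounded. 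Equivalently, $p(x)\int_l^x F/F(x)^2$ is bounded, a Mills-ratio type estimate.

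If $l>-\infty$, I keep the first bound and need $(x-l)\lesssim a(x)$ near $l$, since $\int_x^u(1-F)\to m-l>0$ and $1-F\to1$. More precisely I need $\int_l^xF\cdot C/(a^2p)$ to be bounded, i.e.\ $\int_l^x F\lesssim a(x)^2p(x)$. With $ap\sim 2(m-l)F$, this becomes $\int_l^xF/F(x)\lesssim a(x)$. By l'Hôpital again, $\int_l^x F/F\sim F/p\sim a/(2(m-l))$, which is bounded by $a$. The l'Hôpital steps require a monotone/limit structure that $a$ may lack, and this is the role of $\tilde a$. The assumption $\liminf \tilde a'\ge0$ at a finite $l$ lets me replace $a$ by $\tilde a$ via point~3, and then differentiate $\tilde a$, where l'Hôpital applies.

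The main obstacle is rigorously justifying these l'Hôpital-type ratio limits, $\int_l^xF/F$ versus $a$, with only the comparison $c_1\tilde a\le a\le C_1\tilde a$ available. I would try to derive a differential inequality instead. Setting $\Phi=\tfrac12 ap=\int_l^x(m-t)p$, we have $\Phi'=(m-x)p$ and $a=2\Phi/p$. I would express $G/F$ through $\Phi$ and bound $\frac{d}{dx}\bigl(G\,\tilde a^{-1}\ldots\bigr)$ using the existence of the limits of $\tilde a'$, following the argument of \cite{EDVI} for Assumption~B.
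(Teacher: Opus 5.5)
There is a genuine gap. Your identity $\tfrac12 ap=FH+(1-F)G$, with $G(x)=\int_l^xF$ and $H(x)=\int_x^u(1-F)$, is correct. So are the two bounds $S\le 4H/((1-F)a)$ and $S\le 4G/(Fa)$, and together with l'H\^opital and a case split they would suffice. But the proposal never proves the edge estimates, and it uses each bound at the wrong kind of edge.

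\textbf{The edge $l=-\infty$.} You discard the first bound and reduce to $G\lesssim F$, i.e.\ to $F/p$ bounded, which you expect from Condition~\ref{cdn2}. This is false in admissible cases, because Condition~\ref{cdn2} allows $\tilde a'\to-\infty$. For example, Student's $t$ law with $\nu>1$ degrees of freedom has $a(x)=2(x^2+\nu)/(\nu-1)$, and there $F/p\sim|x|/\nu$ and $G/F\sim|x|/(\nu-1)$ are unbounded. The approximation $ap\approx 2(m-x)F$ you invoke also fails there; the ratio tends to $\nu/(\nu-1)$.

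As in the paper, you must split on $L=\lim_{x\to-\infty}\tilde a(x)/|x|\in[0,\infty]$, which exists by Condition~\ref{cdn2}.
If $L>0$, your first bound gives $S\lesssim |x|/a\le |x|/(c_1\tilde a)$, which is bounded; the paper gets the same estimate via $G\le\tfrac12 ap$.
If $L<\infty$, your second bound works. Condition~\ref{cdn1} bounds $a$ below, and $\limsup G/F\le\limsup F/p\le\limsup a/(2(m-x))\le C_1L/2$, using the exact inequality $\tfrac12 ap\ge (m-x)F$.

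\textbf{A finite edge $l$.} Here the first bound is useless: $S\lesssim 1/a$, and typically $a(x)\to0$ (e.g.\ $a(x)=x(1-x)$ for the uniform law). It is the second bound you need, and you do correctly reduce to $G\lesssim aF$. But the step ``by l'H\^opital, $G/F\sim F/p\sim a/(2(m-l))$'' is not valid. L'H\^opital only yields $\lim G/F=\lim F/p=0$, which says nothing about the rate $G/F=O(a)$. Since $S\sim\frac{2}{m-l}\,pG/F^2$ near $l$, that rate is the entire content of the lemma at this edge.

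You sense that $\tilde a$ must be differentiated, but you stop before saying what to differentiate. The missing step is to apply l'H\^opital to $G/(\tilde aF)$ itself:
\begin{enumerate}
\item By $\liminf_{x\to l}\tilde a'\ge0$ and the mean value theorem, $\tilde a$ is bounded near $l$, so $\tilde aF\to0$.
\item One has $(\tilde aF)'=F\bigl(\tilde a'+\tilde ap/F\bigr)$, with $\tilde ap/F\ge ap/(C_1F)\ge 2(m-x)/C_1$, so this derivative is positive near $l$.
\item Hence $\limsup_{x\to l}G/(\tilde aF)\le C_1/(2(m-l))$, and $S\le 4G/(aF)\le 4G/(c_1\tilde aF)$ is bounded.
\end{enumerate}

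Your intermediate requirement $(x-l)\lesssim a(x)$ is neither necessary nor implied by Condition~\ref{cdn2}. Taking $F(x)=e^{-1/(x-l)}$ near $l$, with $\tilde a=a$ admissible, gives $a\asymp(x-l)^2$, yet $S\to2/(m-l)$. The paper's finite-edge computation also bounds $\limsup_{x\to l}(x-l)/a(x)$ by $C_1$. So the $\tilde aF$ argument above is the safer way to close that case.
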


\noindent{\bf Proof:} The function $S$ is well-defined on $(l, u)$ since, by
\eqref{defa}, $a(x) > 0$ for all $x \in (l,u)$, and $p$ is strictly positive on
$(l,u)$ by assumption.

To prove that $S$ is bounded, we note that $S$ is continuous and positive on $(l, u)$,
so it suffices to show that $S$ has finite upper limits at the edges of $(l, u)$. We
follow the same type of computations as in \cite{EDVI}. By a Fubini argument, we have
\begin{eqnarray}
	\int_{l}^x F(w)\,\mathrm{d}w
	&=& x F(x) - \int_{l}^x t\,p(t)\,\mathrm{d}t
	= (x-m) F(x) + \frac{1}{2} a(x) p(x); \label{fubF}\\
	\int_x^{u} (1-F(w))\,\mathrm{d}w
	&=& \int_x^{u} t\,p(t)\,\mathrm{d}t - x(1 - F(x)). \nonumber
\end{eqnarray}
Consequently, whether $l$ and $u$ are finite or not (with the convention
$\frac{1}{\infty} = 0$), we have:
\[ \int_{l}^x F(w)\,\mathrm{d}w \underset{x \to u}{\sim}
\left(1 - \frac{m}{u}\right) x
\quad \text{and} \quad
\int_x^{u} (1 - F(w))\,\mathrm{d}w \underset{x \to l}{\sim}
\left(1 + \frac{m}{l}\right)(-x). \]

\noindent We prove that $\limsup_{x \to l} S(x) < \infty$; the case of the limit at
$u$ is analogous. We have the following equivalent for $S$ near $l$:
\[ S(x) \underset{x \to l}{\sim} \tilde{S}(x)
:= \left(1 + \frac{m}{l}\right) \frac{-x}{a(x)}
\cdot \frac{\int_{l}^x F(w)\,\mathrm{d}w}{a(x)\,p(x)}. \]

\noindent$\bullet$ \textbf{Case $l > -\infty$.} By applying l'H\^opital's rule (more
precisely the version with inferior and superior limits given in \cite{TAY}):
\begin{eqnarray*}
	\limsup_{x \to l} \tilde{S}(x)
	&\leqslant& (m - l) \left[\limsup_{x \to l} \frac{x-l}{a(x)}\right]
	\cdot \limsup_{x \to l}
	\frac{\int_{l}^x F(w)\,\mathrm{d}w}{(x-l)\cdot a(x)\,p(x)} \\
	&\overset{(\hat{H})}{\leqslant}& C_1 (m - l) \limsup_{x \to l}
	\frac{F(x)}{p(x)\,a(x) - 2(x-l)(x-m)\,p(x)} \\
	&\leqslant& C_1 (m - l)
	\frac{1}{\liminf_{x \to l}\!\left(1 - 2(x-m)\tfrac{x-l}{a(x)}\right)}
	\limsup_{x \to l} \frac{F(x)}{a(x)\,p(x)}.
\end{eqnarray*}
The constant $C_1$ is as in Remark~\ref{rqcst}. The symbol
$\overset{(\hat{H})}{\leqslant}$ denotes the use of l'H\^opital's rule. The inferior
limit is strictly positive since
\[ \liminf_{x \to l} \left(1 - 2(x-m)\frac{x-l}{a(x)}\right)
\geqslant 1 + 2c_1(m - l) > 0, \]
where $c_1 > 0$ is as in Remark~\ref{rqcst}. Applying l'H\^opital's rule once more,
\begin{eqnarray*}
	\limsup_{x \to l} \tilde{S}(x)
	&\overset{(\hat{H})}{\leqslant}&
	\frac{C_1(m-l)}{1 + 2c_1(m-l)} \limsup_{x \to l}
	\frac{p(x)}{-2(x-m)\,p(x)}
	= \frac{C_1}{2 + 4c_1(m-l)} < \infty.
\end{eqnarray*}

\noindent$\bullet$ \textbf{Case $l = -\infty$.} The term $(-x)$ requires stronger
conditions on $a$. Under our assumptions, the limit
$L := \lim_{x \to -\infty} \frac{\tilde{a}(x)}{|x|}$ exists in $\bar{\mathbb{R}}$
and is non-negative. Indeed, if $\tilde{a}(x)$ converges to a finite limit as
$x \to -\infty$, then $L = 0$; otherwise, by l'H\^opital's rule,
$L = -\lim_{x \to -\infty} \tilde{a}'(x) \in \bar{\mathbb{R}}$. We consider two
sub-cases.

\indent\indent$\rhd$ \textbf{Sub-case $L > 0$.} Using the bound
\[ \int_{-\infty}^x F(w)\,\mathrm{d}w
= (x-m)F(x) + \frac{1}{2}a(x)p(x) \leqslant \frac{a(x)p(x)}{2}, \]
we obtain
\begin{eqnarray*}
	\limsup_{x \to -\infty} \tilde{S}(x)
	&\leqslant& \left(\limsup_{x \to -\infty} \frac{\tilde{a}(x)}{a(x)}\right)
	\cdot \limsup_{x \to -\infty}
	\frac{|x|\,p(x)\,a(x)}{2\,\tilde{a}(x)\,a(x)\,p(x)}
	= \frac{C_1}{2L} < \infty.
\end{eqnarray*}

\indent\indent$\rhd$ \textbf{Sub-case $L = 0$.} Using instead that
\[ \int_{-\infty}^x F(w)\,\mathrm{d}w
= (x-m)F(x) + \frac{1}{2}a(x)p(x) \geqslant 0, \]
we deduce $F(x) \leqslant \frac{a(x)p(x)}{2(m-x)}$, and hence
\begin{eqnarray*}
	\limsup_{x \to -\infty} \tilde{S}(x)
	&\leqslant& \limsup_{x \to -\infty}
	\frac{\int_{-\infty}^x \frac{|x|}{m+|w|}\,a(w)\,p(w)\,\mathrm{d}w}
	{2\,a(x)\cdot a(x)\,p(x)} \\
	&\leqslant& \frac{1}{\liminf_{x \to -\infty} a(x)}
	\limsup_{x \to -\infty}
	\frac{\int_{-\infty}^x a(w)\,p(w)\,\mathrm{d}w}{a(x)\,p(x)} \\
	&\overset{(\hat{H})}{\leqslant}&
	\frac{1}{\liminf_{x \to -\infty} a(x)}
	\limsup_{x \to -\infty} \frac{a(x)}{-2(x-m)}
	\leqslant \frac{C_1 L}{2\liminf_{x \to -\infty} a(x)} = 0,
\end{eqnarray*}
which is finite since $\liminf_{x \to -\infty} a(x) > 0$ by Condition~\ref{cdn1}.

In all cases we obtain $\limsup_{x \to l} \tilde{S}(x) < \infty$, and hence
$\limsup_{x \to l} S(x) < \infty$. \qed

\vspace{1em}

\noindent\textbf{Proof of Proposition~\ref{propINGf}.}
By using \eqref{defa}, one verifies that $f_h$ given by \eqref{expsol} is indeed a
solution of \eqref{st1} for $x \in (l, u)$. The equivalence of \eqref{expsol} and
\eqref{exp2sol} follows from a Fubini argument: writing for $t \in (l, u)$ and
$y \in \mathbb{R}^d$,
\begin{equation}\label{eq:tilde_h}
	\begin{split}
		\tilde{h}(t, y) := h(t, y) - \mathbf{E}[h(Z, y)]
		&= \mathbf{E}\!\left[\int_{Z}^t \frac{\partial h}{\partial x}(w, y)\,\mathrm{d}w
		\right]
		= \int_{l}^u \!\left(\int_{z}^t \frac{\partial h}{\partial x}(w, y)\,\mathrm{d}w
		\right) p(z)\,\mathrm{d}z \\
		&= \int_l^t \frac{\partial h}{\partial x}(w, y)\,\mathbf{P}(Z \leqslant w)\,\mathrm{d}w
		- \int_t^u \frac{\partial h}{\partial x}(w, y)\,\mathbf{P}(Z \geqslant w)\,\mathrm{d}w,
	\end{split}
\end{equation}
which yields
\begin{equation}\label{eq:Exp_Tilde_h}
	\int_{l}^x \tilde{h}(t, y)\,p(t)\,\mathrm{d}t
	= \mathbf{P}(Z \geqslant x) \int_{l}^x \frac{\partial h}{\partial x}(w, y)\,
	\mathbf{P}(Z \leqslant w)\,\mathrm{d}w
	+ \mathbf{P}(Z \leqslant x) \int_{x}^u \frac{\partial h}{\partial x}(w, y)\,
	\mathbf{P}(Z \geqslant w)\,\mathrm{d}w.
\end{equation}
This proves \eqref{exp2sol}.

\noindent\textit{Proof of \eqref{INGf}, first inequality.}
Let $x \in (l, u)$. By \eqref{exp2sol},
\[ \begin{split}
	\frac{a(x) p(x)}{2}\,|f_h(x, y)|
	&\leqslant \left[\mathbf{P}(Z \geqslant x) \int_{l}^x \mathbf{P}(Z \leqslant w)\,
	\mathrm{d}w
	+ \mathbf{P}(Z \leqslant x) \int_x^{u} \mathbf{P}(Z \geqslant w)\,\mathrm{d}w
	\right] \left\|\frac{\partial h}{\partial x}\right\|_{\infty} \\
	&= \left[- \mathbf{P}(Z \geqslant x) \int_{l}^x t\,p(t)\,\mathrm{d}t
	+ \mathbf{P}(Z \leqslant x) \int_{x}^{u} t\,p(t)\,\mathrm{d}t \right]
	\left\|\frac{\partial h}{\partial x}\right\|_{\infty}.
\end{split} \]
Using $m = \int_l^u t\,p(t)\,\mathrm{d}t$ and \eqref{defa}, this gives
\[ |f_h(x, y)|
\leqslant \frac{2}{a(x)p(x)}\left(\mathbf{P}(Z \geqslant x)\int_l^x (m-t)p(t)\,\mathrm{d}t
+ \mathbf{P}(Z \leqslant x)\int_x^u (t-m)p(t)\,\mathrm{d}t\right)
\left\|\frac{\partial h}{\partial x}\right\|_{\infty}
= \left\|\frac{\partial h}{\partial x}\right\|_{\infty}. \]

\noindent\textit{Proof of \eqref{INGf}, second inequality.}
Since $\partial_{y_j} f_h = f_{\partial_{y_j} h}$, it suffices to show that for every
bounded measurable $\phi : (l, u) \times \mathbb{R}^d \to \mathbb{R}$, one has
$\| f_{\phi} \|_{\infty} \leqslant C\|\phi\|_{\infty}$. From \eqref{expsol} and the
complementary formula, one obtains for every $x \in (l,u)$ and $y \in \mathbb{R}^d$:
\[ \big| f_{\phi}(x, y) \big|
\leqslant 4\min\!\left\{\frac{\int_{l}^x p(t)\,\mathrm{d}t}{a(x)\,p(x)},\,
\frac{\int_{x}^u p(t)\,\mathrm{d}t}{a(x)\,p(x)}\right\}
\|\phi\|_{\infty}
:= 4\min\{A(x), B(x)\}\,\|\phi\|_{\infty}. \]
One checks that $A$ is non-decreasing and $B$ is non-increasing on $(l,u)$. Indeed,
differentiating $A$ and using \eqref{defa},
\[ A'(x) = \frac{1}{a(x)p(x)}\left(p(x)
- \frac{2(m-x)p(x)}{a(x)p(x)}\int_l^x p(t)\,\mathrm{d}t\right)
= \frac{1}{a(x)p(x)}\left(p(x) - p(x)\right) = 0, \]
so $A' \geqslant 0$. By l'H\^opital's rule, $A(x) \to 0$ as $x \to l$ (or
$\frac{1}{2(m-l)}$ if $l > -\infty$) and $A(x) \to +\infty$ as $x \to u$; symmetrically
$B(x) \to +\infty$ as $x \to l$ and $B(x)$ has a finite positive limit at $u$. Hence
$\min\{A(x), B(x)\}$ attains its maximum at the unique point where $A(x) = B(x)$, i.e.\
where $\int_l^x p(t)\,\mathrm{d}t = \int_x^u p(t)\,\mathrm{d}t$, which is $x = \mu_m$,
the median of $\mu$. Therefore
\[ \big| f_{\phi}(x, y) \big|
\leqslant \frac{2}{a(\mu_m)\,p(\mu_m)}\,\|\phi\|_{\infty}, \]
which proves the second inequality in \eqref{INGf}.

\noindent\textit{Proof of \eqref{INGdf}.}
Differentiating \eqref{expsol} with respect to $x$ and using \eqref{defa} and
\eqref{st1}, one derives the identity, for every $x \in (l,u)$ and
$y \in \mathbb{R}^d$,
\[ \frac{1}{4} a(x)^2 p(x) \frac{\partial f_h}{\partial x}(x, y)
= (m-x)\int_l^x \tilde{h}(t, y)\,p(t)\,\mathrm{d}t
+ \frac{a(x)p(x)}{2}\,\tilde{h}(x, y). \]
Substituting \eqref{eq:tilde_h} and \eqref{eq:Exp_Tilde_h}, and then applying the
integration-by-parts identities
\[ \frac{a(x)p(x)}{2}
= (m-x)\mathbf{P}(Z \leqslant x) + \int_l^x \mathbf{P}(Z \leqslant z)\,\mathrm{d}z
= -(m-x)\mathbf{P}(Z \geqslant x) + \int_x^u \mathbf{P}(Z \geqslant z)\,\mathrm{d}z, \]
one obtains
\begin{eqnarray*}
&&	 \frac{1}{4}a(x)^2 p(x)\frac{\partial f_h}{\partial x}(x, y)\\
&&
= \left(\int_x^u \mathbf{P}(Z \geqslant z)\,\mathrm{d}z\right)
\int_l^x \frac{\partial h}{\partial x}(z,y)\,\mathbf{P}(Z \leqslant z)\,\mathrm{d}z
- \left(\int_l^x \mathbf{P}(Z \leqslant z)\,\mathrm{d}z\right)
\int_x^u \frac{\partial h}{\partial x}(z,y)\,\mathbf{P}(Z \geqslant z)\,\mathrm{d}z. 
\end{eqnarray*}
Since $a(x)^2 p(x) > 0$, taking absolute values yields
\[ \left|\frac{\partial f_h}{\partial x}(x, y)\right|
\leqslant \frac{8}{a(x)^2 p(x)}
\left(\int_x^{u} \mathbf{P}(Z \geqslant z)\,\mathrm{d}z\right)
\left(\int_{l}^{x} \mathbf{P}(Z \leqslant z)\,\mathrm{d}z\right)
\left\|\frac{\partial h}{\partial x}\right\|_{\infty}
= S(x)\left\|\frac{\partial h}{\partial x}\right\|_{\infty}, \]
and since $S$ is bounded by Lemma~\ref{Lem:IngS}, this proves \eqref{INGdf}. \CQFD

\medskip
Let us make some comments around the results and proofs in this section. 
\begin{remark}
	
	\begin{enumerate}
		\item 

{\rm

The estimates obtained in Proposition~\ref{propINGf} play a central role in the sequel.
They provide uniform control of the solution to the Stein equation and of its partial
derivatives, which is essential for deriving quantitative bounds in Wasserstein distance. }

\item {\rm A notable feature of the general setting considered here is the presence of the
diffusion coefficient $a(x)$, which reflects the geometry of the target distribution
$\mu$. In contrast with the Gaussian case, where the corresponding coefficient is
constant, the behavior of $a(x)$ near the boundary of the support has a direct impact
on the regularity of the solution $f_h$. This explains the need for
Conditions~\ref{cdn1} and \ref{cdn2}, which ensure that the Stein factors remain
bounded.
}

\item{\rm 

In particular, the quantity $S(x)$ introduced in Lemma~\ref{Lem:IngS} encapsulates the
main analytical difficulty. Its boundedness allows us to control the derivative
$\partial_x f_h$ uniformly, which is crucial for applying Malliavin integration by
parts in the next section. This phenomenon already appears in the non-Gaussian framework
developed in \cite{EDVI}, and becomes unavoidable in the present general setting. }

\item {\rm Another important aspect is that the bounds in \eqref{INGf} and \eqref{INGdf} depend
only on the Lipschitz norm of the test function $h$. This makes them particularly
well-suited for Wasserstein-type distances, where such norms naturally arise. As a
consequence, the Stein equation provides a flexible tool for quantifying both marginal
convergence and asymptotic independence.
}

\item {\rm Finally, we emphasize that the representation formulas \eqref{expsol} and \eqref{exp2sol}
highlight the intrinsic probabilistic structure of the solution $f_h$, which can be
interpreted as an averaged integral of the derivative of the test function against the
distribution function of $\mu$. This representation will be instrumental in the next
section, where it will be combined with Malliavin calculus techniques to derive
quantitative bounds for the distance between $\mathbf{P}_{(X,Y)}$ and
$\mu \otimes \mathbf{P}_Y$.
}
\end{enumerate}
\end{remark}

\section{Estimation of Wasserstein distance via Malliavin calculus}

In this section, we exploit the bounds obtained in Proposition~\ref{propINGf} to derive
quantitative estimates for the distance between the joint law of $(X,Y)$ and the product
measure $\mu \otimes \mathbf{P}_Y$. Our goal is to provide explicit upper bounds in
Wasserstein distance, expressed in terms of Malliavin-type quantities.

The strategy follows the now classical Stein--Malliavin approach: we combine the Stein
equation associated with $\mu$ with integration by parts on the Wiener space. This
allows us to rewrite the difference
\[
\mathbf{E}[h(X,Y)] - \mathbf{E}[h(Z,Y)]
\]
in terms of Malliavin operators applied to $X$ and $Y$, where $Z \sim \mu$ is
independent of $Y$.

A key role is played by the estimates on the derivatives of the solution to the Stein
equation established in Section~3. These bounds make it possible to control the error
terms uniformly over the class of Lipschitz test functions, leading to quantitative
bounds in Wasserstein distance.

The resulting inequalities extend the Gaussian framework developed in the literature to
the case of general invariant measures of diffusions, and provide a natural criterion
for asymptotic independence in terms of Malliavin contractions.

We define the Wasserstein distance between two probability measures $\nu_1$ and $\nu_2$
on $(l,u) \times \mathbb{R}^d$ by
\begin{equation*}
	d_{\mathrm{W}}(\nu_1, \nu_2)
	:= \sup_{h \in \mathrm{Lip}(1)} \left| \int h\,\mathrm{d}\nu_1
	- \int h\,\mathrm{d}\nu_2 \right|,
\end{equation*}
where $\mathrm{Lip}(1)$ is the set of all differentiable functions
$h : (l,u) \times \mathbb{R}^d \longrightarrow \mathbb{R}$ such that
$\| h \|_{\mathrm{Lip}} \leqslant 1$, with
\[
\| h \|_{\mathrm{Lip}}
= \sup_{\substack{((x,y),(x',y')) \in \left((l,u)\times\mathbb{R}^d\right)^2 \\
		(x,y) \neq (x',y')}}
\frac{|h(x,y) - h(x',y')|}{|(x,y)-(x',y')|}.
\]

Let $(\mathcal{H}, \langle \cdot, \cdot \rangle)$ be a real separable Hilbert space on
which we define the usual Malliavin operators (see Section~\ref{sec:tools}). We state
and prove the main result of this section.

\begin{theorem}\label{tt1}
	Assume Conditions~\ref{cdn1} and \ref{cdn2}. Let $X$ be a $(l,u)$-valued real random
	variable and $Y = (Y_1, \ldots, Y_d)$ a real random vector such that both $X$ and $Y$
	belong to $\mathbb{D}^{1,2}$ and $\mathbf{E}[X] = m$. Then
	\begin{eqnarray}\label{dist}
		&&d_{\mathrm{W}}\!\left(\mathbf{P}_{(X,Y)},\, \mu \otimes \mathbf{P}_Y\right)\\
	&&	\leqslant \|S\|_{\infty}\,
		\mathbf{E}\!\left[\left|\frac{1}{2}a(X)
		- \left\langle \mathrm{D}(-L)^{-1}X,\, \mathrm{D}X\right\rangle\right|\right]
		+ \frac{2}{a(\mu_m)\,p(\mu_m)}
		\sum_{j=1}^d \mathbf{E}\!\left[\left|
		\left\langle \mathrm{D}(-L)^{-1}X,\, \mathrm{D}Y_j\right\rangle\right|\right],\nonumber
	\end{eqnarray}
	where $S$ is defined in Lemma~\ref{Lem:IngS}. If moreover $X$ and $Y$ belong to
	$\mathbb{D}^{1,4}$, then
	\begin{multline}\label{dist2}
		d_{\mathrm{W}}\!\left(\mathbf{P}_{(X,Y)},\, \mu \otimes \mathbf{P}_Y\right) \\
		\leqslant \|S\|_{\infty}\,
		\mathbf{E}\!\left[\left(\frac{1}{2}a(X)
		- \left\langle \mathrm{D}(-L)^{-1}X,\,\mathrm{D}X\right\rangle\right)^2
		\right]^{1/2}
		+ \frac{2}{a(\mu_m)\,p(\mu_m)}
		\sum_{j=1}^d \mathbf{E}\!\left[
		\left\langle \mathrm{D}(-L)^{-1}X,\,\mathrm{D}Y_j\right\rangle^2
		\right]^{1/2}.
	\end{multline}
\end{theorem}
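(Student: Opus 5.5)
Fix $h\in\mathrm{Lip}(1)$ and write $f=f_h$ for the solution of \eqref{st1} given by Proposition~\ref{propINGf}. If $h$ is not bounded, first truncate or smooth it into a bounded $C^1$ function with bounded derivatives and the same Lipschitz constraint, then pass to the limit at the end. Since $\|h\|_{\mathrm{Lip}}\leqslant 1$, we have $\|\partial_x h\|_\infty\leqslant 1$ and $\|\partial_{y_j}h\|_\infty\leqslant 1$. Evaluate \eqref{st1} at $(X,Y)$ and take expectations. Because $Z\sim\mu$ is independent of $Y$, $\mathbf{E}[\mathbf{E}[h(Z,y)]|_{y=Y}]=\int h\,\mathrm{d}(\mu\otimes\mathbf{P}_Y)$. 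Hence
\[
\mathbf{E}[h(X,Y)]-\int h\,\mathrm{d}(\mu\otimes\mathbf{P}_Y)=\mathbf{E}\Bigl[\tfrac12 a(X)\,\partial_x f(X,Y)-(X-m)f(X,Y)\Bigr].
\]

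The key step is the Malliavin integration by parts for the term $\mathbf{E}[(X-m)f(X,Y)]$. Since $\mathbf{E}[X]=m$, we can write $X-m=LL^{-1}X=-\delta \mathrm{D}(-L)^{-1}X$, using $L=-\delta\mathrm{D}$. Duality between $\delta$ and $\mathrm{D}$ then gives $\mathbf{E}[(X-m)f(X,Y)]=\mathbf{E}[\langle \mathrm{D}f(X,Y),\mathrm{D}(-L)^{-1}X\rangle]$. Next apply the chain rule in $\mathbb{D}^{1,2}$. This is justified because $f$ is bounded with bounded partial derivatives by \eqref{INGf}--\eqref{INGdf}, so $f$ is Lipschitz, and $f(X,Y)\in\mathbb{D}^{1,2}$. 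The chain rule gives $\mathrm{D}f(X,Y)=\partial_x f(X,Y)\mathrm{D}X+\sum_j\partial_{y_j}f(X,Y)\mathrm{D}Y_j$. Therefore
\[
\mathbf{E}[h(X,Y)]-\int h\,\mathrm{d}(\mu\otimes\mathbf{P}_Y)=\mathbf{E}\Bigl[\partial_x f(X,Y)\bigl(\tfrac12a(X)-\langle \mathrm{D}(-L)^{-1}X,\mathrm{D}X\rangle\bigr)\Bigr]-\sum_{j=1}^d\mathbf{E}\bigl[\partial_{y_j}f(X,Y)\langle \mathrm{D}(-L)^{-1}X,\mathrm{D}Y_j\rangle\bigr].
\]

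Bounding $|\partial_x f|\leqslant \|S\|_\infty$ by \eqref{INGdf} and $|\partial_{y_j}f|\leqslant 2/(a(\mu_m)p(\mu_m))$ by \eqref{INGf}, then taking the supremum over $h$, gives \eqref{dist}. Under the $\mathbb{D}^{1,4}$ assumption, the inner products $\langle \mathrm{D}(-L)^{-1}X,\mathrm{D}X\rangle$ and $\langle \mathrm{D}(-L)^{-1}X,\mathrm{D}Y_j\rangle$ are in $L^2$. This follows from Cauchy--Schwarz together with $\mathbf{E}\|\mathrm{D}(-L)^{-1}X\|^4\leqslant C\,\mathbf{E}\|\mathrm{D}X\|^4$, and $a(X)\in L^2$ would need to be checked or assumed finite. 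Cauchy--Schwarz, $\mathbf{E}|\cdot|\leqslant\mathbf{E}[\cdot^2]^{1/2}$, then turns \eqref{dist} into \eqref{dist2}.

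The main obstacle is technical. The chain rule has to be justified for $f$, which is only $C^1$ in $x$ on the open interval $(l,u)$, with possibly non-smooth $a$. We also need that $X$ takes values in $(l,u)$ so that $f(X,Y)$ is defined. And the reduction from Lipschitz $h$ to bounded $C^1$ $h$ has to be carried out. For the chain rule I would use the Lipschitz version in $\mathbb{D}^{1,2}$, which holds for globally Lipschitz $f$ with the absolutely continuous law issue handled by approximating $f$ by mollified $f_\varepsilon$. The uniform bounds on the derivatives pass to the mollifications, and the limit is taken in the final inequality, which involves only those bounds.
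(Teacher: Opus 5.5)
This is essentially the paper's proof: you evaluate Stein's equation \eqref{st1} at $(X,Y)$, use $X-m=\delta(\mathrm{D}(-L)^{-1}X)$ with duality \eqref{dua} and the chain rule, bound $\partial_x f_h$ and $\partial_{y_j} f_h$ by \eqref{INGdf} and \eqref{INGf}, take the supremum over $h$, and get \eqref{dist2} from $\mathbf{E}|\cdot|\leqslant\mathbf{E}[\cdot^2]^{1/2}$. Two minor points. First, your intermediate identity has a sign slip: it should read $X-m=LL^{-1}X=-\delta\mathrm{D}L^{-1}X=\delta\mathrm{D}(-L)^{-1}X$, which is the sign your duality formula actually uses. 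Second, your mollification aside is unnecessary and, as phrased, not quite right. The paper simply applies the chain rule to $f_h$, which is $C^1$ with bounded derivatives once $p$, hence $a$, is continuous. The limit cannot be taken only in the final bound, because the factorization against $\tfrac12 a(X)-\langle\mathrm{D}(-L)^{-1}X,\mathrm{D}X\rangle$ needs the same random variable in both terms; you would need $\partial_x f_\varepsilon(X,Y)\to\partial_x f(X,Y)$ inside the identity.
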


\noindent{\bf Proof:}
Let $h : (l,u) \times \mathbb{R}^d \longrightarrow \mathbb{R}$ be a $C^1$ function
such that $\|h\|_{\mathrm{Lip}} \leqslant 1$, and let $Z \sim \mu$ be independent of
$Y$. By Stein's equation \eqref{st1}, we have
\begin{equation*}
	\mathbf{E}[h(X,Y)] - \mathbf{E}[h(Z,Y)]
	= \mathbf{E}\!\left[\frac{1}{2}a(X)\frac{\partial f_h}{\partial x}(X,Y)
	- (X-m)\,f_h(X,Y)\right].
\end{equation*}
Using the identity $\delta\bigl(\mathrm{D}(-L)^{-1}X\bigr) = X - m$
(see Section~\ref{sec:tools}), we obtain
\begin{eqnarray*}
	&&\mathbf{E}[h(X,Y)] - \mathbf{E}[h(Z,Y)] \\
	&=& \frac{1}{2}\mathbf{E}\!\left[a(X)\frac{\partial f_h}{\partial x}(X,Y)\right]
	- \mathbf{E}\!\left[\delta\bigl(\mathrm{D}(-L)^{-1}X\bigr)\cdot f_h(X,Y)\right] \\
	&=& \frac{1}{2}\mathbf{E}\!\left[a(X)\frac{\partial f_h}{\partial x}(X,Y)\right]
	- \mathbf{E}\!\left[\left\langle \mathrm{D}(-L)^{-1}X,\,
	\mathrm{D}f_h(X,Y)\right\rangle\right],
\end{eqnarray*}
where the last step uses the duality relation \eqref{dua} with $F = f_h(X,Y)$ and
$u = \mathrm{D}(-L)^{-1}X$. Applying the chain rule
$\mathrm{D}f_h(X,Y) = \frac{\partial f_h}{\partial x}(X,Y)\,\mathrm{D}X
+ \sum_{j=1}^d \frac{\partial f_h}{\partial y_j}(X,Y)\,\mathrm{D}Y_j$, we get
\begin{eqnarray*}
	&&\mathbf{E}[h(X,Y)] - \mathbf{E}[h(Z,Y)] \\
	&=& \mathbf{E}\!\left[\frac{\partial f_h}{\partial x}(X,Y)
	\left(\frac{1}{2}a(X)
	- \left\langle \mathrm{D}(-L)^{-1}X,\,\mathrm{D}X\right\rangle\right)\right]
	- \sum_{j=1}^d \mathbf{E}\!\left[\frac{\partial f_h}{\partial y_j}(X,Y)
	\left\langle \mathrm{D}(-L)^{-1}X,\,\mathrm{D}Y_j\right\rangle\right].
\end{eqnarray*}
Taking absolute values and applying the bounds \eqref{INGf} and \eqref{INGdf} of
Proposition~\ref{propINGf}, we obtain
\begin{eqnarray*}
&&	\left|\mathbf{E}[h(X,Y)] - \mathbf{E}[h(Z,Y)]\right|\\
&&	\leqslant \|S\|_{\infty}\,
	\mathbf{E}\!\left[\left|\frac{1}{2}a(X)
	- \left\langle \mathrm{D}(-L)^{-1}X,\,\mathrm{D}X\right\rangle\right|\right]
	+ \frac{2}{a(\mu_m)\,p(\mu_m)}
	\sum_{j=1}^d \mathbf{E}\!\left[\left|
	\left\langle \mathrm{D}(-L)^{-1}X,\,\mathrm{D}Y_j\right\rangle\right|\right].
\end{eqnarray*}
Taking the supremum over all $h \in \mathrm{Lip}(1)$ and extending to general Lipschitz
functions by a standard density argument yields \eqref{dist}. The bound \eqref{dist2}
follows from \eqref{dist} by the Cauchy--Schwarz inequality. \CQFD

\medskip

\begin{remark}
	\begin{enumerate}
		\item {\rm  The bounds obtained in Theorem~\ref{tt1} extend and unify several recent results in the
Stein--Malliavin literature.

First, in the Gaussian framework, the multidimensional bounds derived in \cite{T2} show
that the Wasserstein distance between $\mathbf{P}_{(X,Y)}$ and a product measure is
controlled by quantities of the form
\[
\mathbf{E}\!\left[\left|\frac{1}{2}
- \left\langle \mathrm{D}(-L)^{-1}X,\,\mathrm{D}X\right\rangle\right|\right]
\quad\text{and}\quad
\mathbf{E}\!\left[\left|\left\langle \mathrm{D}(-L)^{-1}X,\,
\mathrm{D}Y\right\rangle\right|\right],
\]
which respectively encode the marginal convergence of $X$ toward the standard Gaussian
(for which $\frac{1}{2}a(x) \equiv \frac{1}{2}$) and the asymptotic independence
between $X$ and $Y$.

Second, in the non-Gaussian setting, similar phenomena already appear in the Gamma case
treated in \cite{TuZu1}, where the Stein operator involves a non-constant coefficient
depending on the state variable. In that framework, the control of the Stein solution
requires non-trivial estimates on its derivatives, reflecting the geometry of the target
distribution.
}

\item {\rm  Theorem~\ref{tt1} shows that this structure persists in full generality for invariant
measures of diffusions. The constant $\frac{1}{2}$ of the Gaussian case is replaced by
the state-dependent quantity $\frac{1}{2}a(X)$, where $a$ is the diffusion coefficient
associated with $\mu$. As a consequence, the discrepancy term
\[
\frac{1}{2}a(X) - \left\langle \mathrm{D}(-L)^{-1}X,\,\mathrm{D}X\right\rangle
\]
measures the distance between the law of $X$ and the target measure $\mu$, while the
contraction term
\[
\left\langle \mathrm{D}(-L)^{-1}X,\,\mathrm{D}Y\right\rangle
\]
captures the dependence between $X$ and $Y$.

In addition, the presence of the factor $\|S\|_\infty$ highlights a key analytical
difficulty compared to the Gaussian case: the control of the Stein factors depends on
the behavior of the diffusion coefficient $a$ near the boundary of the support. This
phenomenon is already visible in \cite{TuZu1}, and becomes more pronounced in the
present general setting.

Overall, Theorem~\ref{tt1} provides a unified perspective in which Gaussian, Gamma, and
more general diffusion-invariant distributions can be treated within the same
Stein--Malliavin framework, with asymptotic independence characterized through Malliavin
contraction operators.
}
\end{enumerate}
\end{remark}

\section{Applications}
In this section, we illustrate the applicability of our main result by means of three examples. 
The first one concerns a sequence of random variables living in a fixed Wiener chaos, 
for which the marginal convergence toward a Gamma distribution and the asymptotic independence 
can be quantified explicitly. The second example deals with a non-polynomial functional of a Gaussian process, 
leading to a uniform limiting distribution, while the last example is related with the lognormal law, specifying a result in \cite{TuKus}.

\subsection{Convergence to a two-dimensional Gamma distribution}

Let $(\mathcal{H}, \langle \cdot, \cdot \rangle)$ be a real separable Hilbert space and
consider $(h_{j}, e_{j}, j \geq 1)$ an orthonormal basis in $\mathcal{H}$. Define, for
each $N \geq 1$,
\begin{equation*}
	U_{N} = \frac{2}{N-1}\sum_{1 \leq i < j \leq N} I_{2}(h_{i}\widetilde{\otimes}h_{j})
	= I_{2}(a_{N}),
\end{equation*}
and
\begin{equation*}
	V_{N} = \frac{2}{N-1}\sum_{1 \leq i < j \leq N} I_{2}(g_{i}\widetilde{\otimes}g_{j})
	= I_{2}(b_{N}),
\end{equation*}
where
\begin{equation*}
	g_{i} = \begin{cases}
		h_{i}, & \text{if } 1 \leq i \leq m(N), \\
		e_{i}, & \text{if } i \geq m(N)+1,
	\end{cases}
\end{equation*}
with $1 \leq m(N) \leq N$ and $m(N) \to \infty$ as $N \to \infty$. We use the notation
\begin{equation*}
	a_{N} = \frac{2}{N-1}\sum_{1 \leq i < j \leq N} h_{i}\widetilde{\otimes}h_{j}
	\quad \text{and} \quad
	b_{N} = \frac{2}{N-1}\sum_{1 \leq i < j \leq N} g_{i}\widetilde{\otimes}g_{j}.
\end{equation*}
It is well known that (see e.g.\ \cite{A})
\begin{equation*}
	U_{N} \xrightarrow{(d)} G_{1} \quad \text{and} \quad
	V_{N} \xrightarrow{(d)} G_{2} \quad \text{as } N \to \infty,
\end{equation*}
where $G_{1}, G_{2}$ have the same law as $Z^{2}-1$ with $Z$ a standard Gaussian random
variable, i.e.\ they follow a centered Gamma distribution. The limiting variables $G_1$
and $G_2$ are not necessarily independent. For this distribution, the diffusion
coefficient and mean are $a(x) = 4(x+1)$ and $m = 0$, so that $\frac{1}{2}a(x) =
2(x+1)$. The purpose is to evaluate the Wasserstein distance between the joint law of
$(U_{N}, V_{N})$ and that of $(G_{1}, G_{2})$. By the triangle inequality,
\begin{eqnarray*}
	d_{\mathrm{W}}\!\left(\mathbf{P}_{(U_{N},V_{N})},\mathbf{P}_{(G_{1},G_{2})}\right)
	&\leqslant&
	d_{\mathrm{W}}\!\left(\mathbf{P}_{(U_{N},V_{N})},\mathbf{P}_{(G_{1},V_{N})}\right)
	+ d_{\mathrm{W}}\!\left(\mathbf{P}_{(G_{1},V_{N})},\mathbf{P}_{(G_{1},G_{2})}\right)
	\\
	&\leqslant&
	d_{\mathrm{W}}\!\left(\mathbf{P}_{(U_{N},V_{N})},\mathbf{P}_{(G_{1},V_{N})}\right)
	+ d_{\mathrm{W}}\!\left(\mathbf{P}_{U_{N}},\mathbf{P}_{G_{1}}\right).
\end{eqnarray*}
The marginal distance $d_{\mathrm{W}}(\mathbf{P}_{U_{N}}, \mathbf{P}_{G_{1}})$ has been
evaluated in \cite{NP2}; see also \cite{TuZu1}, Section~5.1. For $N$ large,
\begin{equation}\label{22a-2}
	d_{\mathrm{W}}\!\left(\mathbf{P}_{U_{N}}, \mathbf{P}_{G_{1}}\right)
	\leqslant \frac{C}{\sqrt{N}}.
\end{equation}
We now evaluate $d_{\mathrm{W}}(\mathbf{P}_{(U_{N},V_{N})}, \mathbf{P}_{(G_{1},V_{N})})$.
By Theorem~\ref{tt1}, using $\frac{1}{2}a(U_N) = 2(U_N + 1)$ (since $a(x) = 4(x+1)$),
\begin{eqnarray*}
	&&d_{\mathrm{W}}\!\left(\mathbf{P}_{(U_{N},V_{N})},\mathbf{P}_{(G_{1},V_{N})}\right)
	\\
	&\leqslant& C\left(
	\mathbf{E}\!\left[\left(2(U_{N}+1)
	- \left\langle \mathrm{D}(-L)^{-1}U_{N},\mathrm{D}U_{N}\right\rangle\right)^{2}
	\right]^{1/2}
	+ \mathbf{E}\!\left[\left\langle \mathrm{D}(-L)^{-1}U_{N},
	\mathrm{D}V_{N}\right\rangle^{2}\right]^{1/2}
	\right).
\end{eqnarray*}
From \cite{TuZu1}, Section~5.1, we have
\begin{equation}\label{22a-5}
	\mathbf{E}\!\left[\left|2(U_{N}+1)
	- \left\langle \mathrm{D}(-L)^{-1}U_{N},\mathrm{D}U_{N}\right\rangle\right|^{2}
	\right]^{1/2} \leqslant \frac{C}{\sqrt{N}}.
\end{equation}
Combining \eqref{22a-2} and \eqref{22a-5},
\begin{equation}\label{22a-3}
	d_{\mathrm{W}}\!\left(\mathbf{P}_{(U_{N},V_{N})},\mathbf{P}_{(G_{1},G_{2})}\right)
	\leqslant C\left(\frac{1}{\sqrt{N}}
	+ \mathbf{E}\!\left[\left\langle \mathrm{D}(-L)^{-1}U_{N},
	\mathrm{D}V_{N}\right\rangle^{2}\right]^{1/2}\right).
\end{equation}

It remains to bound the last term. Since $U_N$ lies in the second Wiener chaos,
$\langle \mathrm{D}(-L)^{-1}U_{N}, \mathrm{D}V_{N}\rangle
= \frac{1}{2}\langle \mathrm{D}U_{N}, \mathrm{D}V_{N}\rangle$, so it suffices to
bound $\mathbf{E}[\langle \mathrm{D}U_{N}, \mathrm{D}V_{N}\rangle^{2}]$. By the
product formula for multiple stochastic integrals, we have for every $N \geq 1$,
\begin{equation}\label{dudv}
	\langle \mathrm{D}U_{N}, \mathrm{D}V_{N}\rangle
	= 2\,\mathbf{E}[U_{N}V_{N}] + 4\,I_{2}(a_{N}\otimes_{1}b_{N}).
\end{equation}

\noindent\textbf{Computation of $\mathbf{E}[U_{N}V_{N}]$.}
By the isometry of multiple stochastic integrals,
\begin{eqnarray*}
	\mathbf{E}[U_{N}V_{N}]
	&=& 2\langle\widetilde{a}_{N},\widetilde{b}_{N}\rangle \\
	&=& \frac{8}{(N-1)^{2}}\sum_{1 \leq i < j \leq N}\sum_{1 \leq k < l \leq N}
	\langle h_{i}\widetilde{\otimes}h_{j},\,g_{k}\widetilde{\otimes}g_{l}\rangle \\
	&=& \frac{4}{(N-1)^{2}}\sum_{1 \leq i < j \leq N}\sum_{1 \leq k < l \leq N}
	\bigl[\langle h_{i},g_{k}\rangle\langle h_{j},g_{l}\rangle
	+ \langle h_{i},g_{l}\rangle\langle h_{j},g_{k}\rangle\bigr].
\end{eqnarray*}
Since the basis is orthonormal, we have
\begin{equation}\label{20a-1}
	\langle h_{i}, g_{k}\rangle = \delta_{ik}\,\mathbf{1}_{1 \leq i \leq m(N)}.
\end{equation}
Substituting, the term $\delta_{ik}\delta_{jl}$ contributes when $i = k$ and $j = l$,
which is compatible with $i < j$ and $k < l$, giving a factor of 1 for each such
pair. The term $\delta_{il}\delta_{jk}$ would require $i = l$ and $j = k$, but since
$k < l$ this forces $j < i$, contradicting $i < j$; hence this term vanishes. Therefore
\begin{eqnarray*}
	\mathbf{E}[U_{N}V_{N}]
	&=& \frac{4}{(N-1)^{2}}\sum_{1 \leq i < j \leq N}
	\mathbf{1}_{1 \leq i,j \leq m(N)}
	= \frac{4}{(N-1)^{2}}\cdot\frac{m(N)(m(N)-1)}{2}
	= \frac{2m(N)(m(N)-1)}{(N-1)^{2}}.
\end{eqnarray*}

\noindent\textbf{Computation of $\|a_{N}\otimes_{1}b_{N}\|^{2}$.}
Expanding the contraction using \eqref{20a-1}, only pairs $(i,j)$ with $i < j \leq m(N)$
contribute, and we obtain
\begin{eqnarray*}
	a_{N}\otimes_{1}b_{N}
	&=& \frac{2}{(N-1)^{2}}\sum_{1 \leq i < j \leq m(N)}
	\bigl[h_{j}\otimes A_{i} + h_{i}\otimes A_{j}\bigr],
\end{eqnarray*}
where, for $1 \leq i \leq N$,
\begin{equation*}
	A_{i} := \sum_{\substack{r=1 \\ r \neq i}}^{N} g_{r}.
\end{equation*}
Note that in simplifying the contraction on line \eqref{20a-1}, the original four-term
expression reduces as follows: the terms $\langle h_i, g_k\rangle h_j\otimes g_l$ and
$\langle h_j, g_l\rangle h_i\otimes g_k$ (with the constraint $k \leq m(N)$ from
\eqref{20a-1}) combine into sums over $r \neq i$ and $r \neq j$ respectively, giving
$h_j\otimes A_i$ and $h_i\otimes A_j$. Since the basis $(g_r)$ is orthonormal (each
$g_r$ has unit norm and $\langle g_r, g_s\rangle = \delta_{rs}$, as $g_r = h_r$ or
$g_r = e_r$ and the two families are mutually orthogonal), we have
\begin{equation*}
	\|A_{i}\|^{2} = \sum_{\substack{r=1 \\ r \neq i}}^{N} 1 = N-1,
\end{equation*}
and for $i \neq i'$,
\begin{equation*}
	\langle A_{i}, A_{i'}\rangle
	= \sum_{\substack{r=1 \\ r \neq i,\,i'}}^{N} 1 = N-2.
\end{equation*}
Therefore
\begin{align}\label{22a-4}
	\|a_{N}\otimes_{1}b_{N}\|^{2}
	= \frac{4}{(N-1)^{4}}
	\sum_{\substack{1 \leq i < j \leq m(N) \\ 1 \leq i' < j' \leq m(N)}}
	\Bigl\langle
	h_{j}\otimes A_{i} + h_{i}\otimes A_{j},\;
	h_{j'}\otimes A_{i'} + h_{i'}\otimes A_{j'}
	\Bigr\rangle,
\end{align}
where we expand the inner product using $\langle h_a\otimes g_r, h_b\otimes g_s\rangle
= \delta_{ab}\delta_{rs}$ (which holds since $a,b \leq m(N)$ and all $g_r$ are
orthonormal) into the four terms
\begin{align*}
	\langle h_j\otimes A_i,\; h_{j'}\otimes A_{i'}\rangle
	&= \delta_{jj'}\langle A_i, A_{i'}\rangle, \\
	\langle h_j\otimes A_i,\; h_{i'}\otimes A_{j'}\rangle
	&= \delta_{ji'}\langle A_i, A_{j'}\rangle, \\
	\langle h_i\otimes A_j,\; h_{j'}\otimes A_{i'}\rangle
	&= \delta_{ij'}\langle A_j, A_{i'}\rangle, \\
	\langle h_i\otimes A_j,\; h_{i'}\otimes A_{j'}\rangle
	&= \delta_{ii'}\langle A_j, A_{j'}\rangle.
\end{align*}
We separate diagonal and off-diagonal contributions.

\paragraph{Step 1: Diagonal contribution.}
When $(i,j) = (i',j')$, the only nonzero terms are
$\langle h_j\otimes A_i, h_j\otimes A_i\rangle = \|A_i\|^2 = N-1$ and
$\langle h_i\otimes A_j, h_i\otimes A_j\rangle = \|A_j\|^2 = N-1$,
contributing $2(N-1)$ per pair. Since there are $\frac{m(N)(m(N)-1)}{2}$ pairs
$(i,j)$ with $1 \leq i < j \leq m(N)$, the total diagonal contribution is
\begin{equation*}
	m(N)(m(N)-1)(N-1).
\end{equation*}

\paragraph{Step 2: Off-diagonal contributions.}
We consider $(i,j) \neq (i',j')$.

\medskip\noindent
\textbf{(i) Terms with $j = j'$ and $i \neq i'$.}
The contribution from $\delta_{jj'}\langle A_i, A_{i'}\rangle$ with $i \neq i'$ equals
$N-2$. For fixed $j \in \{2,\ldots,m(N)\}$, the indices $i, i'$ range independently
over $\{1,\ldots,j-1\}$ with $i \neq i'$, giving $(j-1)(j-2)$ ordered pairs. Summing
over $j$:
\begin{equation*}
	\sum_{j=2}^{m(N)}(j-1)(j-2) = \frac{m(N)(m(N)-1)(m(N)-2)}{3}.
\end{equation*}
The total contribution from this case is
$(N-2)\,\dfrac{m(N)(m(N)-1)(m(N)-2)}{3}$.

\medskip\noindent
\textbf{(ii) Terms with $i = i'$ and $j \neq j'$.}
By symmetry, the contribution from $\delta_{ii'}\langle A_j, A_{j'}\rangle$ with
$j \neq j'$ is identical:
$(N-2)\,\dfrac{m(N)(m(N)-1)(m(N)-2)}{3}$.

\medskip\noindent
\textbf{(iii) Mixed terms.}
We split into two sub-cases.

\textit{Sub-case $j = i'$.} The term $\delta_{ji'}\langle A_i, A_{j'}\rangle$
fires when $j = i'$. The constraints $i < j$ and $i' < j'$ then give $i < j = i' < j'$,
so the valid triples are $(i, j, j')$ with $i < j < j' \leq m(N)$. Since $i \neq j'$
(as $i < j < j'$), we have $\langle A_i, A_{j'}\rangle = N-2$. The number of such
triples is $\binom{m(N)}{3} = \dfrac{m(N)(m(N)-1)(m(N)-2)}{6}$, contributing
$(N-2)\,\dfrac{m(N)(m(N)-1)(m(N)-2)}{6}$.

\textit{Sub-case $i = j'$.} Similarly, the term $\delta_{ij'}\langle A_j, A_{i'}\rangle$
fires when $i = j'$, giving constraints $i' < j'= i < j$, so valid triples are
$(i', i, j)$ with $i' < i < j \leq m(N)$. Again $\langle A_j, A_{i'}\rangle = N-2$
and there are $\binom{m(N)}{3}$ such triples. The contribution is
$(N-2)\,\dfrac{m(N)(m(N)-1)(m(N)-2)}{6}$.

Adding the two sub-cases, the total mixed contribution is
$(N-2)\,\dfrac{m(N)(m(N)-1)(m(N)-2)}{3}$.

\medskip
Collecting all contributions from Steps~1 and~2:
\begin{align*}
	\|a_{N}\otimes_{1}b_{N}\|^{2}
	= \frac{4}{(N-1)^{4}}\left[
	m(N)(m(N)-1)(N-1)
	+ m(N)(m(N)-1)(m(N)-2)(N-2)
	\right],
\end{align*}
or equivalently,
\begin{align*}
	\|a_{N}\otimes_{1}b_{N}\|^{2}
	= \frac{4\,m(N)(m(N)-1)}{(N-1)^{4}}
	\Bigl[(N-1) + (m(N)-2)(N-2)\Bigr].
\end{align*}
For $N$ large, $(N-1) + (m(N)-2)(N-2) \leqslant C\,m(N)\,N$, so
\begin{equation*}
	\|a_{N}\otimes_{1}b_{N}\|^{2}
	\leqslant C\left(\frac{m(N)}{N}\right)^{3}.
\end{equation*}

\noindent\textbf{Conclusion.}
From \eqref{dudv} and the isometry $\mathbf{E}[I_2(f)^2] = 2\|f\|^2$,
\begin{equation*}
	\mathbf{E}\bigl[\langle \mathrm{D}U_{N},\mathrm{D}V_{N}\rangle^{2}\bigr]
	= 4\bigl(\mathbf{E}[U_{N}V_{N}]\bigr)^{2}
	+ 32\,\|a_{N}\otimes_{1}b_{N}\|^{2}
	\leqslant C\left[\frac{m(N)^{4}}{N^{4}}
	+ \left(\frac{m(N)}{N}\right)^{3}\right]
	\leqslant C\,\frac{m(N)^{2}}{N^{2}},
\end{equation*}
where in the last step we used $(m(N)/N)^3 \leqslant (m(N)/N)^2$ and
$(m(N)/N)^4 \leqslant (m(N)/N)^2$ (both valid since $m(N) \leqslant N$). Note that
the sharper bound $\mathbf{E}[\langle \mathrm{D}U_N,\mathrm{D}V_N\rangle^2]^{1/2}
\leqslant C(m(N)/N)^{3/2}$ also holds and would give a slightly better rate, but the
bound above suffices for our purposes. Since $\langle \mathrm{D}(-L)^{-1}U_N,
\mathrm{D}V_N\rangle = \frac{1}{2}\langle \mathrm{D}U_N,\mathrm{D}V_N\rangle$,
we conclude that
\begin{equation*}
	\mathbf{E}\!\left[\left\langle \mathrm{D}(-L)^{-1}U_{N},
	\mathrm{D}V_{N}\right\rangle^{2}\right]^{1/2}
	\leqslant C\,\frac{m(N)}{N}.
\end{equation*}
Inserting into \eqref{22a-3},
\begin{equation}\label{22a-7}
	d_{\mathrm{W}}\!\left(\mathbf{P}_{(U_{N},V_{N})},
	\mathbf{P}_{(G_{1},G_{2})}\right)
	\leqslant C\left(\frac{1}{\sqrt{N}} + \frac{m(N)}{N}\right).
\end{equation}

\begin{remark}
	\begin{enumerate}
		\item {\rm 
	If $m(N)/N \to 0$ as $N \to \infty$, then the sequences $(U_{N})$ and $(V_{N})$ are
	asymptotically independent. If $m(N) = N$, the right-hand side of \eqref{22a-7} does
	not tend to zero and we cannot conclude asymptotic independence.

}

	\item {\rm We point out that Example~5.1 could in principle also be treated using the
	multidimensional Stein--Malliavin framework developed in \cite{TuZu1}, since the target
	distribution is Gamma in both cases and the marginal estimates \eqref{22a-2} and
	\eqref{22a-5} are already established there. However, the contribution of the present
	example is different in nature: what is new here is not the marginal convergence of
	$U_N$ or $V_N$ individually, but rather the \emph{joint} convergence of the pair
	$(U_N, V_N)$ toward $(G_1, G_2)$ and the quantitative criterion for their
	\emph{asymptotic independence}, expressed in terms of the parameter $m(N)$ controlling
	the overlap between the two sequences. The computation of $\mathbf{E}[U_N V_N]$, the
	contraction norm $\|a_N\otimes_1 b_N\|^2$, and the resulting bound \eqref{22a-7}
	showing that asymptotic independence holds when $m(N)/N \to 0$ are entirely new and go
	beyond what was done in \cite{TuZu1}. In this sense, the present example should be seen
	as a direct continuation and multidimensional extension of the results of \cite{TuZu1},
	rather than a redundant application of them.
}
\end{enumerate}
\end{remark}

\subsection{An example related to the uniform distribution}

Let $h_1, h_2, h_{3,N}, h_4 \in \mathcal{H}$ be elements of the underlying Hilbert
space with unit norm. Let $W$ be an isonormal Gaussian process. Assume that
\[
\langle h_1, h_2\rangle = \langle h_1, h_4\rangle = \langle h_2, h_{3,N}\rangle
= \langle h_2, h_4\rangle = \langle h_{3,N}, h_4\rangle = 0,
\]
and
\[
\langle h_1, h_{3,N}\rangle = \rho(N),
\]
where $\rho(N) \to 0$ as $N \to \infty$.

\begin{remark}
	For example, if $\mathcal{H} = L^{2}(\mathbb{R})$, one can take
	$h_1 = \mathbf{1}_{[0,1]}$,
	$h_{3,N} = \mathbf{1}_{[1-\frac{1}{N},\, 2-\frac{1}{N}]}$,
	$h_2 = \mathbf{1}_{[2,3]}$, $h_4 = \mathbf{1}_{[4,5]}$.
\end{remark}

For $N \geq 1$, define the random variables
\[
X = e^{-\frac{1}{2}(W(h_1)^2 + W(h_2)^2)},
\qquad
Y_N = e^{-\frac{1}{2}(W(h_{3,N})^2 + W(h_4)^2)}.
\]
It is well known that both $X$ and $Y_N$ follow the uniform distribution on $[0,1]$.
We want to study the asymptotic independence of $X$ and $Y_N$, that is, to compare
$\mathbf{P}_{(X,Y_N)}$ and $\mathbf{P}_X \otimes \mathbf{P}_{Y_N}$ as $N \to \infty$.
We apply inequality \eqref{dist} of Theorem~\ref{tt1} to estimate this Wasserstein
distance. Since $X$ has exactly the target distribution $\mu = \mathrm{Unif}[0,1]$,
the Stein characterization \eqref{22a-1} gives $\mathbf{E}[\mathcal{A}f(X)] = 0$ for
all suitable $f$, which implies that the discrepancy term vanishes:
\[
\mathbf{E}\!\left[\left|\frac{1}{2}a(X)
- \left\langle \mathrm{D}(-L)^{-1}X,\,\mathrm{D}X\right\rangle\right|\right] = 0.
\]
This identity can be verified directly: since $X$ is an exponential functional of
Gaussian variables, an explicit application of the chain rule and Mehler's formula
shows that $\frac{1}{2}a(X) = \langle \mathrm{D}(-L)^{-1}X, \mathrm{D}X\rangle$
almost surely; we refer to \cite{TuKus} for the general argument in this setting.
Hence \eqref{dist} reduces to
\[
d_{\mathrm{W}}\!\left(\mathbf{P}_{(X,Y_N)},\,\mathbf{P}_X \otimes \mathbf{P}_{Y_N}
\right)
\leqslant \frac{2}{a(\mu_m)\,p(\mu_m)}\,
\mathbf{E}\!\left[\left|\left\langle \mathrm{D}(-L)^{-1}X,\,
\mathrm{D}Y_N\right\rangle\right|\right].
\]
For the uniform distribution on $[0,1]$, the density is $p(x) = 1$, the median is
$\mu_m = \frac{1}{2}$, and the diffusion coefficient \eqref{defa} gives
$a(x) = x(1-x)$, so $a(\frac{1}{2}) = \frac{1}{4}$ and thus
$\frac{2}{a(\mu_m)\,p(\mu_m)} = 8$. We therefore need to estimate the Malliavin
contraction $\langle \mathrm{D}(-L)^{-1}X, \mathrm{D}Y_N\rangle$.

To avoid a notational clash with the isonormal process $W$, we set
\[
U := W(h_1), \quad V := W(h_2), \quad U_N := W(h_{3,N}), \quad \xi := W(h_4),
\]
so that
\[
X = e^{-\frac{1}{2}(U^2+V^2)}, \qquad Y_N = e^{-\frac{1}{2}(U_N^2+\xi^2)}.
\]
By the chain rule of Malliavin calculus,
\[
\mathrm{D}X = -X(Uh_1 + Vh_2),
\qquad
\mathrm{D}Y_N = -Y_N(U_N h_{3,N} + \xi\, h_4).
\]
Using the representation
\[
\mathrm{D}(-L)^{-1}X = \int_0^\infty e^{-t} P_t(\mathrm{D}X)\,\mathrm{d}t,
\]
where $(P_t)_{t \geq 0}$ denotes the Ornstein--Uhlenbeck semigroup, we obtain
\[
\langle \mathrm{D}(-L)^{-1}X,\,\mathrm{D}Y_N\rangle
= \int_0^\infty e^{-t}\langle P_t(\mathrm{D}X),\,\mathrm{D}Y_N\rangle\,\mathrm{d}t.
\]
By the Mehler formula,
\[
P_t(\mathrm{D}X)
= -\mathbf{E}'\!\left[e^{-\frac{1}{2}(U_t^2+V_t^2)}(U_t h_1 + V_t h_2)\right],
\]
where
\[
(U_t, V_t) = e^{-t}(U,V) + \sqrt{1-e^{-2t}}(U',V'),
\]
$(U',V')$ is an independent copy of $(U,V)$, and $\mathbf{E}'$ denotes expectation
with respect to $(U',V')$, i.e.\ conditional expectation given
$(U, V, U_N, \xi)$. Taking the inner product with $\mathrm{D}Y_N$ and using the
orthogonality assumptions, the only nonzero contribution comes from
$\langle h_1, h_{3,N}\rangle = \rho(N)$ (all other inner products among
$h_1, h_2, h_{3,N}, h_4$ vanish by assumption), yielding
\[
\langle P_t(\mathrm{D}X),\,\mathrm{D}Y_N\rangle
= \rho(N)\,Y_N\,U_N\,\mathbf{E}'\!\left[e^{-\frac{1}{2}(U_t^2+V_t^2)}\,U_t\right].
\]
Therefore
\[
\langle \mathrm{D}(-L)^{-1}X,\,\mathrm{D}Y_N\rangle
= \rho(N)\,Y_N\,U_N\int_0^\infty e^{-t}\,
\mathbf{E}'\!\left[e^{-\frac{1}{2}(U_t^2+V_t^2)}\,U_t\right]\mathrm{d}t
:= \rho(N)\,Y_N\,U_N\,G(U,V),
\]
where $G(U,V) := \int_0^\infty e^{-t}\mathbf{E}'[e^{-\frac{1}{2}(U_t^2+V_t^2)}
U_t]\,\mathrm{d}t$. The integral converges since $|U_t| \leqslant |U| + |U'|$ and\\
$e^{-\frac{1}{2}(U_t^2+V_t^2)} \leqslant 1$, giving an integrand bounded by
$e^{-t}(|U|+|U'|)$, which is integrable in $t$.

We now bound $\mathbf{E}[|\langle \mathrm{D}(-L)^{-1}X, \mathrm{D}Y_N\rangle|]$.
Note that although $U_N$ and $U$ are correlated (with $\mathbf{E}[U_N U] =
\langle h_{3,N}, h_1\rangle = \rho(N)$), both $U_N$ and $G(U,V)$ have finite moments
of all orders by Gaussian hypercontractivity. Since $Y_N \leqslant 1$, applying the
Cauchy--Schwarz inequality gives
\[
\mathbf{E}\!\left[\left|\langle \mathrm{D}(-L)^{-1}X,\,\mathrm{D}Y_N\rangle\right|
\right]
\leqslant |\rho(N)|\,\mathbf{E}\!\left[Y_N^2\,U_N^2\,G(U,V)^2\right]^{1/2}
\leqslant |\rho(N)|\,\mathbf{E}\!\left[U_N^2\,G(U,V)^2\right]^{1/2}.
\]
Since $\mathbf{E}[U_N^4]^{1/2}\mathbf{E}[G(U,V)^4]^{1/2} < \infty$, we conclude that
there exists a constant $C > 0$, independent of $N$, such that
\[
\mathbf{E}\!\left[\left|\langle \mathrm{D}(-L)^{-1}X,\,
\mathrm{D}Y_N\rangle\right|\right]
\leqslant C\,|\rho(N)|.
\]
Therefore
\[
d_{\mathrm{W}}\!\left(\mathbf{P}_{(X,Y_N)},\,\mathbf{P}_X \otimes \mathbf{P}_{Y_N}
\right)
\leqslant C\,|\rho(N)|.
\]
In particular, since $X$ is fixed and $(Y_N)_{N \geq 1}$ is a sequence of random
variables, the pair $(X, Y_N)$ is asymptotically independent as $N \to \infty$
whenever $\rho(N) \to 0$.

This example shows that asymptotic independence is governed by the decay of the
Malliavin contraction term. In the present case, the rate is directly controlled by
the scalar product $\rho(N) = \langle h_1, h_{3,N}\rangle$.

\begin{remark}{\rm 
	This example is genuinely beyond the reach of the previously available Stein--Malliavin
	machinery. The target distribution here is $\mu = \mathrm{Unif}[0,1]$, whose diffusion
	coefficient $a(x) = x(1-x)$ vanishes at both endpoints of the bounded support $[0,1]$.
	This boundary degeneracy is fundamentally different from the Gaussian case (where
	$a \equiv 1$) and from the Gamma  case (where $a(x)=4(x+1)$ on $(-1, \infty)$),
	both of which are treated in \cite{T2, TuZu1, TuZu2}. In particular, the Stein solution
	analysis of \cite{TuZu1} and \cite{TuZu2} relies on properties of $a$ that hold only
	for those specific distributions and breaks down entirely for distributions supported on
	a bounded interval. The structural Conditions~\ref{cdn1} and~\ref{cdn2} introduced in
	the present paper, together with the new bounds of Proposition~\ref{propINGf}, are
	precisely what make it possible to handle this case. To our knowledge, the asymptotic
	independence result for the uniform distribution obtained here could not have been
	derived from any previously existing framework.
}
\end{remark}

\subsection{An example related to the lognormal distribution}

Let $(h_i, i \geqslant 1)$ be an orthonormal family in $\mathcal{H}$. For every
$N \geqslant 1$, let
\[
Y_N := e^{-\frac{1}{\sqrt{2N}} \sum_{i=1}^N \left( W(h_i)^2 - 1 \right)}.
\]
Note that $-\frac{1}{\sqrt{2N}}\sum_{i=1}^N(W(h_i)^2-1) = -Z_N$ where
$Z_N := \frac{1}{\sqrt{2N}}\sum_{i=1}^N(W(h_i)^2-1) \xrightarrow{d} \mathcal{N}(0,1)$
by the CLT, and since $-Z_N \overset{d}{=} Z_N$, we have $Y_N \xrightarrow{d} e^Z$
with $Z \sim \mathcal{N}(0,1)$. Thus $e^Z$ follows the lognormal distribution with
parameters $(0,1)$, denoted here as $\mathfrak{L}$, whose mean is
$m = \mathbf{E}[e^Z] = e^{1/2}$. In Section~5 of \cite{TuKus}, it is proven that
there exists $C > 0$ such that for every $N \geqslant 1$,
\[
d_{\mathrm{W}}\!\left(Y_N,\, e^{Z}\right) \leqslant \frac{C}{\sqrt{N}},
\]
and the diffusion coefficient $a(x)$ of $\mathfrak{L}$ is given by equation~(31)
in \cite{TuKus} (we do not recall it here as we will not use it explicitly). We study
here the influence of the size of $I_N$ on the joint convergence. More precisely, for
$I_N \subseteq \{1,\ldots,N\}$, we estimate
\[
\delta_N := d_{\mathrm{W}}\!\left(\bigl(Y_N,\,(W(h_i),\,i \in I_N)\bigr);\;
\mathfrak{L} \otimes \mathcal{N}(0, I_{\sharp I_N})\right)
\]
using Theorem~\ref{tt1}. Applying \eqref{dist} with $X = Y_N$ and
$Y = (W(h_i),\, i \in I_N)$, we obtain
\begin{equation}\label{eq:Distance_LogNormal}
	\delta_N \leqslant C\,\mathbf{E}\!\left[\left|\frac{1}{2}a(Y_N)
	- \left\langle \mathrm{D}(-L)^{-1}Y_N,\,\mathrm{D}Y_N\right\rangle\right|\right]
	+ C\sum_{n \in I_N}
	\mathbf{E}\!\left[\left|\left\langle \mathrm{D}(-L)^{-1}Y_N,\,
	\mathrm{D}W(h_n)\right\rangle\right|\right],
\end{equation}
where the constant $C$ absorbs $\|S\|_\infty$ and $\frac{2}{a(\mu_m)p(\mu_m)}$, both
finite by Proposition~\ref{propINGf}. The first term on the right-hand side of
\eqref{eq:Distance_LogNormal} is computed in Section~5 of \cite{TuKus}, giving the
bound $\frac{C}{\sqrt{N}}$. We focus on the second term. For $n \in I_N$, since
$W(h_n) = I_1(h_n)$, we have $\mathrm{D}W(h_n) = h_n$. Using the representation
\begin{equation}\label{eq:LogNormal_OU}
	\left\langle \mathrm{D}(-L)^{-1}Y_N,\, h_n\right\rangle
	= \int_0^{+\infty} e^{-t}\,
	\bigl\langle P_t(\mathrm{D}Y_N),\, h_n\bigr\rangle\,\mathrm{d}t,
\end{equation}
where $(P_t)_{t \geqslant 0}$ is the Ornstein--Uhlenbeck semigroup, we compute each
factor. By the chain rule,
\[
\mathrm{D}Y_N = \frac{-2}{\sqrt{2N}}\,Y_N\sum_{i=1}^N W(h_i)\,h_i.
\]
Let $W'$ be an independent copy of $W$ on $\mathcal{H}$. By the Mehler formula applied
to the $\mathcal{H}$-valued random variable $\mathrm{D}Y_N$,
\begin{eqnarray*}
&&P_t(\mathrm{D}Y_N)\\
&&= \frac{-2}{\sqrt{2N}}\sum_{i=1}^N h_i\,
\mathbf{E}\!\left[\left.\left(e^{-t}W(h_i) + \sqrt{1-e^{-2t}}\,W'(h_i)\right)
e^{-\frac{1}{\sqrt{2N}}\sum_{k=1}^N
	\left\{\left(e^{-t}W(h_k)+\sqrt{1-e^{-2t}}\,W'(h_k)\right)^2-1\right\}}
\right| W\right].
\end{eqnarray*}
Taking the inner product with $h_n$ and using orthonormality
$\langle h_i, h_n\rangle = \delta_{in}$, only the $i = n$ term survives:
\begin{eqnarray*}
&&\bigl\langle P_t(\mathrm{D}Y_N),\, h_n\bigr\rangle\\
&&= \frac{-2}{\sqrt{2N}}\,\mathbf{E}\!\left[\left.
\left(e^{-t}W(h_n) + \sqrt{1-e^{-2t}}\,W'(h_n)\right)
e^{-\frac{1}{\sqrt{2N}}\sum_{k=1}^N
	\left\{\left(e^{-t}W(h_k)+\sqrt{1-e^{-2t}}\,W'(h_k)\right)^2-1\right\}}
\right| W\right].
\end{eqnarray*}
Substituting into \eqref{eq:LogNormal_OU} and performing the change of variable
$\alpha = e^{-t}$, $\mathrm{d}t = -\frac{\mathrm{d}\alpha}{\alpha}$,
\begin{eqnarray}
&&	\left\langle \mathrm{D}(-L)^{-1}Y_N,\, h_n\right\rangle\nonumber\\
	&=& \frac{-2}{\sqrt{2N}}\int_0^1
	\mathbf{E}\!\left[\left.
	\left(\alpha W(h_n) + \sqrt{1-\alpha^2}\,W'(h_n)\right)
	e^{-\frac{1}{\sqrt{2N}}\sum_{k=1}^N
		\left\{\left(\alpha W(h_k)+\sqrt{1-\alpha^2}\,W'(h_k)\right)^2-1\right\}}
	\right| W\right]\mathrm{d}\alpha \nonumber\\
&	:=& \frac{-2}{\sqrt{2N}}\int_0^1 \mathbf{E}[F_N(\alpha)|W]\,\mathrm{d}\alpha.\label{eq:LogNormal_OU_a}
\end{eqnarray}
Since $W'$ is independent of $W$, the variables $(W'(h_k))_{k=1}^N$ are i.i.d.\
$\mathcal{N}(0,1)$ conditionally on $W$, and by independence across $k$,
\begin{eqnarray*}
\mathbf{E}[F_N(\alpha)|W]
&=& \mathbf{E}\!\left[\left.
\left(\alpha W(h_n)+\sqrt{1-\alpha^2}\,W'(h_n)\right)
e^{-\frac{1}{\sqrt{2N}}\left\{
	\left(\alpha W(h_n)+\sqrt{1-\alpha^2}\,W'(h_n)\right)^2-1\right\}}
\right|W\right]\\
&&\times 
\prod_{\substack{k=1\\k\neq n}}^N
\mathbf{E}\!\left[\left.
e^{-\frac{1}{\sqrt{2N}}\left\{
	\left(\alpha W(h_k)+\sqrt{1-\alpha^2}\,W'(h_k)\right)^2-1\right\}}
\right|W\right].
\end{eqnarray*}
Applying Lemma~1 of \cite{TuKus} with $C_0 = \alpha W(h_n)$ and
$K = \frac{1}{\sqrt{2N}}$ to the first factor, and the scalar version to each
remaining factor, we obtain
\begin{eqnarray*}
\mathbf{E}[F_N(\alpha)|W]
&
=& \frac{\alpha W(h_n)}
{\left(1+\frac{2(1-\alpha^2)}{\sqrt{2N}}\right)^{3/2}}
\exp\!\left(\frac{-\alpha^2 W(h_n)^2/\sqrt{2N}}
{1+\frac{2(1-\alpha^2)}{\sqrt{2N}}}\right)\\
&&\times 
\prod_{\substack{k=1\\k\neq n}}^N
\frac{1}{\left(1+\frac{2(1-\alpha^2)}{\sqrt{2N}}\right)^{1/2}}
\exp\!\left(\frac{-\alpha^2 W(h_k)^2/\sqrt{2N}}
{1+\frac{2(1-\alpha^2)}{\sqrt{2N}}}\right)
e^{N/\sqrt{2N}},
\end{eqnarray*}
which simplifies to
\[
\mathbf{E}[F_N(\alpha)|W]
= \frac{\alpha W(h_n)}
{\left(1+\frac{2(1-\alpha^2)}{\sqrt{2N}}\right)^{N/2+1}}
\exp\!\left(\frac{-\alpha^2\sum_{k=1}^N W(h_k)^2/\sqrt{2N}}
{1+\frac{2(1-\alpha^2)}{\sqrt{2N}}}\right)
e^{N/\sqrt{2N}}.
\]
Substituting back into \eqref{eq:LogNormal_OU_a} and making the change of variable
$b = \alpha^2$, $\mathrm{d}\alpha = \frac{\mathrm{d}b}{2\sqrt{b}}$, we get the exact
expression
\begin{equation}\label{eq:Lognormal_exact_second_term}
	\mathbf{E}\!\left[\left|\left\langle \mathrm{D}(-L)^{-1}Y_N,\,
	h_n\right\rangle\right|\right]
	= \frac{e^{N/\sqrt{2N}}}{\sqrt{2N}}\,
	\mathbf{E}\!\left[|W(h_n)|
	\int_0^1
	\frac{1}{\left(1+\frac{2(1-b)}{\sqrt{2N}}\right)^{N/2+1}}
	e^{-\frac{b\,Z_N}{1+\frac{2(1-b)}{\sqrt{2N}}}}
	e^{-\frac{bN/\sqrt{2N}}{1+\frac{2(1-b)}{\sqrt{2N}}}}
	\,\mathrm{d}b\right],
\end{equation}
where $Z_N := \frac{1}{\sqrt{2N}}\sum_{k=1}^N(W(h_k)^2-1) \xrightarrow{d}
\mathcal{N}(0,1)$ by the CLT. We derive the asymptotic behaviour of
\eqref{eq:Lognormal_exact_second_term} as $N\to\infty$. For fixed $b\in(0,1)$, using
$(1+x)^{N/2} \approx e^{Nx/2}$ as $x\to 0$,
\[
\frac{e^{N/\sqrt{2N}}}
{\left(1+\frac{2(1-b)}{\sqrt{2N}}\right)^{N/2+1}}
e^{-\frac{bN/\sqrt{2N}}{1+\frac{2(1-b)}{\sqrt{2N}}}}
= e^{\frac{b}{\sqrt{2}}\sqrt{N}}\cdot
e^{-\frac{b}{\sqrt{2}}\sqrt{N}}\cdot
e^{\frac{1}{2}(1-b)^2+b(1-b)+\mathrm{o}(1)}
= e^{b(1-b)+\frac{1}{2}(1-b)^2+\mathrm{o}(1)}
:= g(b)\,e^{\mathrm{o}(1)},
\]
where $g(b) := e^{b(1-b)+\frac{1}{2}(1-b)^2}$ is a deterministic bounded function of
$b \in [0,1]$ and $\mathrm{o}(1) \to 0$ as $N\to\infty$ in the Landau sense. We
deduce from \eqref{eq:Lognormal_exact_second_term} that
\[
\mathbf{E}\!\left[\left|\left\langle \mathrm{D}(-L)^{-1}Y_N,\,
h_n\right\rangle\right|\right]
= \frac{1}{\sqrt{2N}}\,\mathbf{E}\!\left[|W(h_n)|
\int_0^1 e^{-\frac{b\,Z_N}{1+\frac{2(1-b)}{\sqrt{2N}}}}\,g(b)\,\mathrm{d}b\right]
(1+\mathrm{o}(1)).
\]
We now decompose $Z_N$ by separating the $k=n$ term. Writing
\[
Z_N = \frac{W(h_n)^2-1}{\sqrt{2N}}
+ \sqrt{\frac{2N-1}{2N}}\,Z'_{N-1},
\qquad
Z'_{N-1} := \frac{1}{\sqrt{2N-1}}\sum_{\substack{k=1\\k\neq n}}^N(W(h_k)^2-1),
\]
the two summands are independent since $(h_k)$ is orthonormal, and $Z'_{N-1}
\xrightarrow{d} \mathcal{N}(0,1)$ by the CLT. By independence and Fubini,
\[
\mathbf{E}\!\left[|W(h_n)|
\int_0^1 e^{-\frac{b\,Z_N}{1+\frac{2(1-b)}{\sqrt{2N}}}}\,g(b)\,\mathrm{d}b\right]
= \int_0^1
\mathbf{E}\!\left[|W(h_n)|\,
e^{-b\frac{W(h_n)^2-1}{\sqrt{2N}+2(1-b)}}\right]
\mathbf{E}\!\left[
e^{-b\sqrt{\frac{2N-1}{2N}}\frac{Z'_{N-1}}{1+\frac{2(1-b)}{\sqrt{2N}}}}\right]
g(b)\,\mathrm{d}b.
\]
By the dominated convergence theorem (using $g(b) \leqslant C$ and Gaussian
integrability), and the convergence in distribution of $Z'_{N-1}$ to
$\mathcal{N}(0,1)$, we conclude that
\[
\sqrt{2N}\,\mathbf{E}\!\left[\left|\left\langle \mathrm{D}(-L)^{-1}Y_N,\,
h_n\right\rangle\right|\right]
\xrightarrow{N\to\infty}
\mathbf{E}[|Z|]\int_0^1 \mathbf{E}\!\left[e^{-bZ}\right]
e^{b(1-b)+\frac{1}{2}(1-b)^2}\,\mathrm{d}b
=: C_0 < \infty,
\]
where $Z \sim \mathcal{N}(0,1)$. In particular,
$\mathbf{E}[|\langle \mathrm{D}(-L)^{-1}Y_N, h_n\rangle|] \sim \frac{C_0}{\sqrt{2N}}$
as $N\to\infty$, uniformly in $n \in I_N$. Summing over $n \in I_N$ and inserting
into \eqref{eq:Distance_LogNormal}, we conclude that
\[
\delta_N \leqslant C\left(\frac{1}{\sqrt{N}}
+ \frac{\sharp I_N}{\sqrt{N}}\right)
= C\,\frac{1 + \sharp I_N}{\sqrt{N}},
\]
so $\delta_N$ is of order $\sharp I_N/\sqrt{N}$.

\begin{remark}
	\begin{enumerate}
		\item {\rm One possible interpretation is the following. If $\sharp I_N = N^{\alpha}$ with
	$\alpha < 1/2$, then the block of observations $(W(h_i),\, i \in I_N)$ carries
	asymptotically no information about $Y_N$ as $N \to \infty$: the conditional law of
	$Y_N$ given $(W(h_i),\, i \in I_N)$ is asymptotically the same as the unconditional
	law of $Y_N$.
}
\item 	{\rm One can also recover the order $\sharp I_N/\sqrt{N}$ by applying the bound introduced
	in \cite{TuZu2}, provided one swaps the roles of $Y_N$ and the vector
	$(W(h_i),\, i \in I_N)$, treating the Gaussian vector as the first component. This
	gives
	\begin{eqnarray*}
&&	d_{\mathrm{W}}\!\left(\bigl((W(h_i),\,i \in I_N),\,Y_N\bigr);\;
	\mathcal{N}(0, I_{\sharp I_N})\otimes\mathfrak{L}\right)
	\leqslant C\sum_{n \in I_N}
	\mathbf{E}\!\left[\left|\left\langle \mathrm{D}(-L)^{-1}W(h_n),\,
	\mathrm{D}Y_N\right\rangle\right|\right]\\
	&&
	= C\sum_{n \in I_N}
	\mathbf{E}\!\left[\left|\left\langle h_n,\,\mathrm{D}Y_N\right\rangle\right|\right]
	= \frac{C}{\sqrt{N}}\sum_{n \in I_N}\mathbf{E}[|W(h_n)|\,Y_N],
		\end{eqnarray*}
	where we used $\mathrm{D}(-L)^{-1}W(h_n) = h_n$ since $W(h_n)$ lies in the first
	Wiener chaos. Using the decomposition of $Y_N$ above, one checks that for every
	$1 \leqslant n \leqslant N$,
	\[
	\mathbf{E}[|W(h_n)|\,Y_N] \xrightarrow{N\to\infty} \mathbf{E}[|Z|]\,\mathbf{E}[e^Z],
	\]
	and one recovers the same order $\sharp I_N/\sqrt{N}$ for the distance.
	
	However, this role-swapping trick relies in an essential way on the fact that the
	second component $(W(h_i),\, i \in I_N)$ is itself Gaussian, which makes \cite{TuZu2}
	applicable after the swap. The natural and direct formulation of the problem ---
	bounding the distance to $\mathfrak{L}\otimes\mathcal{N}(0,I_{\sharp I_N})$ with the
	lognormal as the \emph{first} marginal target --- is handled directly by
	Theorem~\ref{tt1} of the present paper, and cannot be addressed by \cite{TuZu2} in
	this form. More importantly, if the second component $(W(h_i))$ were replaced by a
	non-Gaussian functional, the role-swapping would no longer be available and
	\cite{TuZu2} would be inapplicable altogether, whereas Theorem~\ref{tt1} would remain
	valid without any modification.
}
\end{enumerate}
\end{remark}

\section{Tools from Malliavin calculus}\label{sec:tools}

Here we describe the elements from stochastic analysis that we will need in the paper.
Let $\mathcal{H}$ be a real separable Hilbert space and let
$(W(h), h \in \mathcal{H})$ be an isonormal Gaussian process on a probability space
$(\Omega, \mathcal{F}, \mathbf{P})$, that is, a centered Gaussian family of random
variables such that $\mathbf{E}[W(\varphi)W(\psi)] = \langle\varphi,\psi\rangle_{\mathcal{H}}$
for all $\varphi, \psi \in \mathcal{H}$.

Denote by $I_n$ the multiple stochastic integral of order $n$ with respect to $W$
(see \cite{N}). The mapping $I_n$ is an isometry between the Hilbert space
$\mathcal{H}^{\odot n}$ (symmetric tensor product) equipped with the scaled norm
$\frac{1}{\sqrt{n!}}\|\cdot\|_{\mathcal{H}^{\otimes n}}$ and the Wiener chaos of
order $n$, defined as the closed linear span of the random variables $H_n(W(h))$
with $h \in \mathcal{H}$, $\|h\|_{\mathcal{H}} = 1$, where $H_n$ is the Hermite
polynomial of degree $n \in \mathbb{N}$ defined by
\begin{equation*}
	H_n(x) = \frac{(-1)^n}{n!}\exp\!\left(\frac{x^2}{2}\right)
	\frac{\mathrm{d}^n}{\mathrm{d}x^n}\exp\!\left(-\frac{x^2}{2}\right),
	\qquad x \in \mathbb{R}.
\end{equation*}
Note that this normalization includes the factor $1/n!$ and thus differs from some
references (e.g.\ \cite{N}, where the convention is $H_n^{\mathrm{NP}}(x) = n!\,H_n(x)$).
In particular, $\mathbf{E}[I_n(f)] = 0$ for all $f \in \mathcal{H}^{\otimes n}$ with
$n \geqslant 1$ (see e.g.\ Lemma~1.1.1 in \cite{N}). Since $I_n(f) = I_n(\tilde{f})$,
where $\tilde{f}$ denotes the symmetrization of $f$,
\begin{equation*}
	\tilde{f}(x_1,\ldots,x_n)
	= \frac{1}{n!}\sum_{\sigma \in \mathcal{S}_n}
	f(x_{\sigma(1)},\ldots,x_{\sigma(n)}),
\end{equation*}
the isometry of multiple integrals can be written as follows: for $m, n$ positive
integers,
\begin{eqnarray}
	\mathbf{E}\bigl[I_n(f)\,I_m(g)\bigr]
	&=& n!\,\langle\tilde{f},\tilde{g}\rangle_{\mathcal{H}^{\otimes n}}
	\quad \text{if } m = n, \nonumber \\
	\mathbf{E}\bigl[I_n(f)\,I_m(g)\bigr]
	&=& 0
	\quad \text{if } m \neq n. \label{iso}
\end{eqnarray}

\medskip

Any square-integrable random variable $F$ measurable with respect to the
$\sigma$-algebra generated by $W$ can be expanded into an orthogonal sum of multiple
stochastic integrals
\begin{equation}\label{sum1}
	F = \sum_{n=0}^\infty I_n(f_n),
\end{equation}
where $f_n \in \mathcal{H}^{\odot n}$ are uniquely determined symmetric kernels and
$I_0(f_0) = \mathbf{E}[F]$.

\medskip

Let $L$ be the Ornstein--Uhlenbeck operator defined by
\begin{equation*}
	LF = -\sum_{n \geqslant 1} n\,I_n(f_n)
\end{equation*}
for $F$ given by \eqref{sum1}, on the domain
$\mathrm{Dom}(L) = \bigl\{F \in L^2(\Omega) :
\sum_{n=1}^\infty n^2\,n!\,\|f_n\|^2_{\mathcal{H}^{\otimes n}} < \infty\bigr\}$.
Its pseudo-inverse $(-L)^{-1}$ is defined by $(-L)^{-1}F = \sum_{n=1}^\infty
\frac{1}{n}I_n(f_n)$ for $F$ with $\mathbf{E}[F] = 0$.

\medskip

For $p > 1$ and $\alpha \in \mathbb{R}$, the Sobolev--Watanabe space
$\mathbb{D}^{\alpha,p}$ is the closure of the set of polynomial random variables with
respect to the norm
\begin{equation*}
	\|F\|_{\alpha,p}
	= \|(\mathrm{Id} - L)^{\alpha/2}F\|_{L^p(\Omega)},
\end{equation*}
where $\mathrm{Id}$ denotes the identity operator (we use $\mathrm{Id}$ to avoid
confusion with the multiple stochastic integral $I_n$). We denote by $\mathrm{D}$
the Malliavin derivative operator, which acts on smooth random variables of the form
$F = g(W(h_1),\ldots,W(h_n))$ (with $g$ smooth with compact support and
$h_i \in \mathcal{H}$) by
\begin{equation*}
	\mathrm{D}F
	= \sum_{i=1}^n \frac{\partial g}{\partial x_i}(W(h_1),\ldots,W(h_n))\,h_i.
\end{equation*}
The operator $\mathrm{D}$ is continuous from $\mathbb{D}^{\alpha,p}$ into
$\mathbb{D}^{\alpha-1,p}(\mathcal{H})$. Its adjoint is the divergence operator
(Skorokhod integral), denoted by $\delta$, which acts from
$\mathbb{D}^{\alpha-1,p}(\mathcal{H})$ into $\mathbb{D}^{\alpha,p}$. The following
duality relation holds for every $F \in \mathbb{D}^{1,2}$ and
$u \in \mathbb{D}^{1,2}(\mathcal{H}) \subset \mathrm{Dom}(\delta)$:
\begin{equation}\label{dua}
	\mathbf{E}\bigl[F\,\delta(u)\bigr]
	= \mathbf{E}\bigl[\langle \mathrm{D}F,\, u\rangle_{\mathcal{H}}\bigr].
\end{equation}
A key identity used in the proof of Theorem~\ref{tt1} is the following: for any
$F \in \mathbb{D}^{1,2}$,
\begin{equation}\label{key}
	\delta\bigl(\mathrm{D}(-L)^{-1}(F - \mathbf{E}[F])\bigr) = F - \mathbf{E}[F].
\end{equation}
This follows from the relation $\delta \circ \mathrm{D} = -L$ on $\mathbb{D}^{1,2}$
and $-L(-L)^{-1}G = G$ for $G$ with $\mathbf{E}[G] = 0$.

\medskip

The product formula for multiple stochastic integrals is given for
$f \in \mathcal{H}^{\odot n}$ and $g \in \mathcal{H}^{\odot m}$ by
\begin{equation}\label{prod}
	I_n(f)\,I_m(g)
	= \sum_{r=0}^{n \wedge m} r!\binom{n}{r}\binom{m}{r}
	I_{m+n-2r}(f\,\widetilde{\otimes}_r\, g),
\end{equation}
where $f\,\widetilde{\otimes}_r\, g$ denotes the symmetrization of the $r$-contraction
$f \otimes_r g$ (see e.g.\ Section~1.1.2 in \cite{N}). The contraction is defined when
$\mathcal{H} = L^2(T, \mathbb{B}, \nu)$, with $\nu$ a non-atomic sigma-finite measure,
by
\begin{eqnarray}
	&&\label{contra}
	(f \otimes_r g)(t_1,\ldots,t_{n+m-2r}) \\
	&=& \int_{T^r} f(u_1,\ldots,u_r,t_1,\ldots,t_{n-r})\,
	g(u_1,\ldots,u_r,t_{n-r+1},\ldots,t_{n+m-2r})\,
	\nu(\mathrm{d}u_1)\cdots\nu(\mathrm{d}u_r), \nonumber
\end{eqnarray}
for $r = 1,\ldots,n \wedge m$, and $f \otimes_0 g = f \otimes g$ (tensor product).
Under this assumption $\mathcal{H} = L^2(T,\mathbb{B},\nu)$, we have
$f \otimes_r g \in \mathcal{H}^{\otimes n+m-2r} = L^2(T^{n+m-2r}, \nu^{\otimes(n+m-2r)})$.
In general, $f \otimes_r g$ is not symmetric and we denote its symmetrization by
$f\,\widetilde{\otimes}_r\, g$.

\end{document}